\definecolor{darkblue}{RGB}{0,0,160}
\author{Thomas Kahle}
\address{Fakultät für Mathematik, Otto-von-Guericke Universität\\ Magdeburg, Germany}
\email{www.thomas-kahle.de}
\author{Johannes~Rauh}
\address{MPI MIS \\ Leipzig, Germany}
\email{jrauh@mis.mpg.de}
\title{Toric fiber products versus Segre products}
\date{April 28, 2014}
\newcommand{\Kb}{\mathbb{K}}
\newcommand{\Nb}{\mathbb{N}}
\newcommand{\Pb}{\mathbb{P}}
\newcommand{\Zb}{\mathbb{Z}}
\newcommand{\Acal}{\mathcal{A}}
\newcommand{\Bcal}{\mathcal{B}}
\newcommand{\Ccal}{\mathcal{C}}
\newcommand{\Fcal}{\mathcal{F}}
\newcommand{\Gcal}{\mathcal{G}}
\newcommand{\Lcal}{\mathcal{L}}
\newcommand{\Vcal}{\mathcal{V}}
\def\ol#1{{\overline {#1}}}
\newcommand{\oxab}{\ol x^\alpha_\beta}
\newcommand{\oyag}{\ol y^\alpha_\gamma}
\newcommand{\bal}{{\boldsymbol{\alpha}}}
\newcommand{\bbet}{{\boldsymbol{\beta}}}
\newcommand{\bgam}{{\boldsymbol{\gamma}}}
\newcommand{\biot}{{\boldsymbol{\iota}}}
\newcommand{\SP}[1][\Nb\Acal]{\mathbin{\#_{#1}}}
\newcommand{\TFP}[1][\Acal]{\times_{#1}}
\DeclareMathOperator{\codim}{codim}
\DeclareMathOperator{\Spec}{Spec}
\DeclareMathOperator{\gp}{gp}
\newtheorem{thm}{Theorem} 
\newtheorem{lemma}[thm]{Lemma}
\newtheorem{cor}[thm]{Corollary}
\newtheorem{prop}[thm]{Proposition}
\theoremstyle{definition}
\newtheorem{ex}[thm]{Example}
\newtheorem{rem}[thm]{Remark}
\newtheorem{defi}[thm]{Definition}
\newtheorem*{conv}{Conventions}
\newlength{\wwolengthA}
\newlength{\wwolengthB}
\newcommand{\wwo}[2]{\relax\ifmmode\settowidth{\wwolengthA}{$#1$}\settowidth{\wwolengthB}{$#2$}\else\settowidth{\wwolengthA}{#1}\settowidth{\wwolengthB}{#2}\fi%
  \addtolength{\wwolengthB}{-\wwolengthA}%
  \hspace{0.5\wwolengthB}#1\hspace{0.5\wwolengthB}}
\newcommand{\gama}{\wwo{\gamma}{\beta}}
\begin{document}

\makeatletter
  \@namedef{subjclassname@2010}{\textup{2010} Mathematics Subject Classification}
\makeatother
\subjclass[2010]{Primary: 13C05; Secondary: 05E40, 13A02, 14M25, 20M25}


\begin{abstract}
  The toric fiber product is an operation that combines two ideals that are homogeneous with respect to a grading by an
  affine monoid.  The Segre product is a related construction that combines two multigraded rings.  The quotient ring by
  a toric fiber product of two ideals is a subring of the Segre product, but in general this inclusion is strict.  We
  contrast the two constructions and show that any Segre product can be presented as a toric fiber product without
  changing the involved quotient rings.  This allows to apply previous results about toric fiber products to the study
  of Segre products.

  We give criteria for the Segre product of two affine toric varieties to be dense in their toric fiber product, and for
  the map from the Segre product to the toric fiber product to be finite.  We give an example that shows that the
  quotient ring of a toric fiber product of normal ideals need not be normal.  In rings with Veronese type gradings, we
  find examples of toric fiber products that are always Segre products, and we show that iterated toric fiber products
  of Veronese ideals over Veronese rings are normal.
\end{abstract}

\maketitle

\section{Introduction}
\label{sec:intro}

For each positive integer $d$ let $[d]:= \{1,\dots,d\}$.  Let $\Acal$ be a vector configuration of $d$ integer vectors
$a_{1},\dots,a_{d}\in\Zb^{h}$, for some~$h>0$.  Denote by~$\Nb\Acal$ the \emph{affine monoid} generated by~$\Acal$.  The
\emph{codimension} of $\Acal$ is $\codim(\Acal)=h-\dim(\Nb\Acal)$.
It is often convenient to think of $\Acal$ as an $(h\times d)$-matrix, with columns indexed by~$\alpha\in[d]$.
Therefore, the \emph{lattice} $\ker_{\Zb}\Acal:=\{c\in\Zb^{d}:\sum_{\alpha=1}^{d}c_{\alpha}a_{\alpha}=0\}$ is called the
\emph{integer kernel} of~$\Acal$.  With this notation $\codim(\Acal)=\dim(\ker_{\Zb}(\Acal))$.

The following condition on the matrix $\Acal$ has many nice implications and often appears in applications; nevertheless
we do not impose it throughout:
\begin{equation}
  \label{eq:condA}\tag{$\ast$}
  \text{The vectors }\Acal\text{ lie in an affine hyperplane that does not pass through the origin.}
\end{equation}
If $\Acal$ is interpreted as a matrix, then \eqref{eq:condA} is satisfied if and only if the row space of $\Acal$
contains the vector $(1,1,\dots,1)$.

\begin{conv}
  When working with many indices we use simple superscripts to denote some of them.  Throughout the paper $x^{\alpha}$
  stands for a variable with superscript index~$\alpha$.  The square of a variable with $\alpha=1$, for instance, is
  denoted by~$(x^{1})^{2}$.  In this paper all tensor products $\otimes$ are tensor products over the ground field~$\Kb$.
\end{conv}

For each $\alpha\in[d]$ let $d_{\alpha},d'_{\alpha}\ge 0$ be non-negative integers.  Consider the polynomial rings
\begin{align*}
  \Kb[x] &= \Kb[x^{\alpha}_{\beta} : \alpha\in[d],\beta\in[d_{\alpha}]] \\
  \Kb[y] &= \Kb[y^{\alpha}_{\gamma} : \alpha\in[d],\gamma\in[d'_{\alpha}]]\,,
\end{align*}
where $\Kb$ is a field.
Define an $\Nb\Acal$-grading on $\Kb[x]$ and $\Kb[y]$ via
$\deg(x^{\alpha}_{\beta})=\deg(y^{\alpha}_{\gamma})=a_{\alpha}$.  If~\eqref{eq:condA} is satisfied, then every
$\Nb\Acal$-homogeneous ideal is also homogeneous with respect to the total $\Nb$-grading under which every variable
$x^\alpha_\beta,y^\alpha_\gamma$ has degree one.

Consider an additional ring $\Kb[z] = \Kb[z^{\alpha}_{\beta,\gamma} :
\alpha\in[d],\beta\in[d_{\alpha}],\gamma\in[d'_{\alpha}]]$ with grading $\deg(z^{\alpha}_{\beta,\gamma})=a_{\alpha}$,
and let $\phi_{S}:\Kb[z]\to\Kb[x]\otimes\Kb[y]$ be the ring homomorphism defined by
$\phi_{S}(z^{\alpha}_{\beta,\gamma})=x^{\alpha}_{\beta}\otimes y^{\alpha}_{\gamma}$.  If $d=1$, then $\phi_{S}$
describes the Segre embedding
\begin{equation*}
  \Pb^{d_{1}-1}\Kb\times\Pb^{d'_{1}-1}\Kb \hookrightarrow \Pb^{d_{1}d'_{1}-1}\Kb.
\end{equation*}
In general, $\phi_{S}$ describes a product of $d$ Segre embeddings.

Let $I\subseteq\Kb[x]$ and $J\subseteq\Kb[y]$ be two $\Nb\Acal$-homogeneous ideals, and denote by
\begin{align*}
  \pi_{I} &:\Kb[x]\to R:=\Kb[x]/I \\
  \pi_{J} &:\Kb[\wwo{y}{x}]\to \wwo{S}{R}:=\Kb[\wwo{y}{x}]/J
\end{align*}
the canonical projections.  Their tensor product as $\Kb$-algebra homomorphisms is
$\pi_{I}\otimes\pi_{J}:\Kb[x]\otimes\Kb[y]\to R\otimes S$.  The images of $x^{\alpha}_{\beta}$ and $y^{\alpha}_{\gamma}$
are $\pi_{I}(x^{\alpha}_{\beta})=\oxab$ and~$\pi_{J}(y^{\alpha}_{\gamma})=\oyag$ and the image of
$x^{\alpha}_{\beta}\otimes y^{\alpha}_{\gamma}\in\Kb[x]\otimes\Kb[y]$ under $\pi_{I}\otimes\pi_{J}$ is
$\oxab\otimes\oyag$.
\begin{defi}\label{d:fiber}
  The \emph{toric fiber product} of $I$ and $J$ is the ideal
  $I\times_{\Acal}J:=\ker((\pi_{I}\otimes\pi_{J})\circ\phi_{S})$.
\end{defi}
\begin{defi}\label{d:segre}
  The \emph{(multigraded) Segre product} of $R$ and $S$ is $R\SP S := \sum_{a\in\Nb\Acal}R_{a}\otimes S_{a}$.
\end{defi}
The toric fiber product was first defined in~\cite{sullivant07:_toric} as a unifying generalization of different
constructions of algebraic statistics.  Further progress was made in~\cite{TFP-II}.  The multigraded Segre product is a
generalization of the construction that is used to present the product of projective varieties as a projective variety.
Definition~\ref{d:segre} appered in~\cite{TFP-II} but the multigraded generalization is natural, and (a variant) appears
already in the work of Chow~\cite{chow1964unmixedness}.  Recent interest in this construction comes from the study of
generators of \emph{phylogenetic semigroups}~\cite{buczynska12}.  In the following we omit the attribute multigraded.

The Segre product is a product of rings.  Whenever $R\cong R'$ and $S\cong S'$, then $R\SP S \cong R'\SP S'$.  In
constrast, the toric fiber product is a product of ideals and depends on the chosen presentations.  If $R = \Kb[x]/I$
and $S = \Kb[y]/J$, then $\Kb[z]/(I\TFP J)$ depends on choices made in the presentation, as the next
Example~\ref{ex:dep-on-pres} shows.  One aim of this paper is to make some of the more subtle consequences of this fact
explicit.

\begin{ex}\label{ex:dep-on-pres}
  Let $\Acal=(1,\dots,1)$ and $d_{\alpha}=1=d'_{\alpha}$ for all~$\alpha\in[d]$.  In this case,
  $\Kb[x]\cong\Kb[y]\cong\Kb[z]$.  If $I\subset\Kb[x]$ and $J\subset\Kb[y]$ are monomial ideals, then their toric fiber
  product is just their sum after variable substitution.
  With $d=2$ consider the ideals
  \begin{equation*}
	I_{1}= \langle x^{1}\rangle, \qquad  I_{2}= \langle x^{2}\rangle, \qquad  J= \langle y^{1}\rangle.
  \end{equation*}
  Then the three rings
  \begin{equation*}
	R_{1} :=\Kb[x^{1},x^{2}]/I_{1}, \qquad 
	R_{2} :=\Kb[x^{1},x^{2}]/I_{2}, \qquad 
	S:=\Kb[y^{1},y^{2}]/J
  \end{equation*}
  are all isomorphic (as $\Nb\Acal$-graded rings) to a polynomial ring with a single variable of degree one.  Now
  $I_{1}\times_{\Acal}J\cong\langle z^{1}\rangle$, and so $\Kb[z]/(I_{1}\times_{\Acal}J)\cong R_{1}$.  On the other hand,
  $I_{2}\times_{\Acal}J\cong\langle z^{1},z^{2}\rangle$, and so $\Kb[z]/(I_{2}\times_{\Acal}J)\cong \Kb$.  The Segre
  product is $R_{1}\SP S\cong R_{2}\SP S\cong \Kb[z]/(I_{1}\times_{\Acal}J)$.
\end{ex}

One could define a ``toric fiber product'' of $\Kb$-algebras $R, S$ with homogeneous generating sets
$\{x^{\alpha}_{\beta}\}\subset R$ and $\{y^{\alpha}_{\gamma}\}\subset S$ as the subring of $R\SP S$ generated by the
images $\oxab\otimes\oyag$ of $x^{\alpha}_{\beta}\otimes y^{\alpha}_{\gamma}$.  With a suitable choice of the generating
sets, it is possible to present the Segre product as the quotient ring of such a product
(Proposition~\ref{prop:SP-as-TFP}).  As it turns out, every Segre product $R\SP S$ of $\Nb\Acal$-graded affine rings
contains a family of toric fiber products corresponding to presentations of $R$ and $S$ as quotients of polynomial
rings.  The Segre product itself is also of this form.

Example~\ref{ex:dep-on-pres} also shows that the toric fiber product depends implicitly on the choice of the vector
configuration $\Acal\subset\Nb\Acal$; most notably in the case where multiple identical vectors are allowed.
Lemma~\ref{lem:canonical-TFP} shows that the situation is slightly better if~\eqref{eq:condA} is satisfied.

The Segre product of two affine rings is again an affine ring (Section~\ref{sec:generators-SP}).  A particular finite
generating set is given in Proposition~\ref{prop:Segre-generators}.  Any such presentation of $R\SP S$ as a quotient of
some polynomial ring defines an ideal.  In Section~\ref{sec:relations} we compare this ideal to the toric fiber product
of ideals presenting $R$ and~$S$, and we show that the corresponding quotient rings form a partially ordered set with
maximal element~$R\SP S$.  In some cases (Sections~\ref{sec:toric-ideals} and~\ref{exs:Veronese-products}) the relations
presenting $R\SP S$ can be found explicitly.

If $\Acal$ is fixed, then one can take iterated toric fiber products or Segre products.  In
Section~\ref{sec:neutral-elements} we show that the toric ideal of $\Acal$ equals the neutral element with respect to
both the toric fiber product and the Segre product.  Consequently the set of $\Nb\Acal$-graded rings forms a monoid
under each of the operations.  Section~\ref{sec:toric-ideals} is dedicated to the case of toric ideals and affine monoid
rings.  Section~\ref{sec:examples} collects examples that illustrate our results.

\section{Generators of the Segre product}
\label{sec:generators-SP}

The Segre product of affine $\Kb$-algebras is again an affine $\Kb$-algebra.  This follows since the Segre product can
be described as the coordinate ring of a quotient of $\Spec R\times\Spec S$ by an algebraic torus action,
see~\cite{TFP-II}.  In the following we explicitly describe a finite set of generators.

The map $\pi_{I}\otimes\pi_{J}$ is homogeneous and surjective, and so $R_{a}\otimes S_{a}$ is a homomorphic image of
$\Kb[x]_{a}\otimes\Kb[y]_{a}$.  For arbitrary multiindices $\bal=(\alpha_{1},\dots,\alpha_{n})$,
$\bal'=(\alpha'_{1},\dots,\alpha'_{m})$, $\bbet=(\beta_{1},\dots,\beta_{n})$ and $\bgam=(\gamma_{1},\dots,\gamma_{m})$
with $\beta_{i}\in[d_{\alpha_{i}}]$ and $\gamma_{j}\in[d'_{\alpha'_{j}}]$ let
\begin{equation*}
  M^{\bal,\bal'}_{\bbet,\bgam}
  = \Big(\prod_{i=1}^{n}x^{\alpha_{i}}_{\beta_{i}}\Big)\otimes \Big(\prod_{j=1}^{m}y^{\alpha'_{j}}_{\gamma_{j}}\Big),
\end{equation*}
Any element of $\Kb[x]_{a}\otimes\Kb[y]_{a}$ is a sum of monomials of the form $M^{\bal,\bal'}_{\bbet,\bgam}$, where the
multiindices $\bal,\bal'$ are such that the vector $\tilde c = \sum_{i=1}^{n}e_{\alpha_{i}} -
\sum_{j=1}^{m}e_{\alpha'_{j}}$ belongs to the kernel of~$\Acal$.  If $n=m=1$ and $\alpha_{1}=\alpha'_{1}=\alpha$, then
$M^{\alpha,\alpha}_{\beta,\gamma}=x^{\alpha}_{\beta}\otimes y^{\alpha}_{\gamma}$ is a \emph{\hypertarget{simpMon}{simple
    monomial}}.  The image $\ol M^{\alpha,\alpha}_{\beta,\gamma}$ of $M^{\alpha,\alpha}_{\beta,\gamma}$ under
$\pi_{I}\otimes\pi_{J}$ is a \emph{\hypertarget{simpGen}{simple generator}}.  Observe that if the ring $\Kb[z]/(I\TFP
J)$ is identified with its image as a subring of $R\otimes S$, then it is generated by the simple generators.  The
following definition is needed to find generators of $R\SP S$:
\begin{defi}
  The \emph{Graver basis} $\Gcal$ of $\ker_{\Zb}\Acal$ is the unique minimal subset of $\ker_{\Zb}\Acal$ such that any
  $c\in\ker_{\Zb}\Acal$ has a finite sign-consistent representation $c=\sum_{i}g_{i}$ with $g_{i}\in\Gcal$.  Here,
  sign-consistency means that $(g_{i})_{\alpha}c_{\alpha}\ge 0$ for all $i$ and~$\alpha\in[d]$.
\end{defi}
Graver bases exist 
and are unique, because they consist exactly
of the \emph{primitive} vectors in $\ker_{\Zb}\Acal$
(see~\cite[Chapter~7]{sturmfels96:_gr_obner_bases_and_convex_polyt}).  Graver bases are of fundamental importance in
integer optimization~\cite[Part~II]{deLoeraHemmecke13}.
Let $\Gcal$ be the Graver basis of~$\ker_{\Zb}\Acal$, and let~$g\in\Gcal$.  Write $g = \sum_{i=1}^{n}e_{\alpha_{i}} -
\sum_{j=1}^{m}e_{\alpha'_{j}}$ as an integral combination of unit vectors, where
$n=\sum_{\alpha:g_{\alpha}>0}|g_{\alpha}|$ and~$m=\sum_{\alpha:g_{\alpha}<0}|g_{\alpha}|$, and let
$\bal=(\alpha_{1},\dots,\alpha_{n})$ and $\bal'=(\alpha'_{1},\dots,\alpha'_{m})$.  This decomposition is unique under
the assumption that $\alpha_{i}\le \alpha_{i+1}$ and $\alpha'_{j}\le\alpha'_{j+1}$.  For each $i=1,\dots,n$ and
$j=1,\dots,m$ let $\beta_{i}\in[d_{\alpha_{i}}]$ and~$\gamma_{j}\in[d'_{\alpha_{j}}]$.  Then we call
$\bbet=(\beta_{1},\dots,\beta_{n})$, $\bgam=(\gamma_{1},\dots,\gamma_{m})$ a \emph{pair of \hypertarget{GIS}{Graver
    index sequences}} of~$g$.  For an arbitrary pair of Graver index sequences $\bbet$ and $\bgam$ we call
$M^{g}_{\bbet,\bgam}:=M^{\bal,\bal'}_{\bbet,\bgam}$ a \emph{Graver monomial}.  The image $\ol
M^{g}_{\bbet,\bgam}=\pi_{I}\otimes\pi_{J}(M^{g}_{\bbet,\bgam})$ is a \emph{Graver generator}.
\begin{prop}
  \label{prop:Segre-generators}
  The Segre product $R\SP S$ is generated, as a $\Kb$-algebra, by the simple generators and the Graver
  generators.
\end{prop}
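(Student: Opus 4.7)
The plan is as follows. Since $\pi_I\otimes\pi_J$ is surjective onto each $\Nb\Acal$-homogeneous component of $R\otimes S$, the space $R_a\otimes S_a$ is spanned by the images $\ol M^{\bal,\bal'}_{\bbet,\bgam}$ of monomials $M^{\bal,\bal'}_{\bbet,\bgam}\in\Kb[x]_a\otimes\Kb[y]_a$. It therefore suffices to factor each such monomial in $\Kb[x]\otimes\Kb[y]$ as a product of simple monomials and Graver monomials; applying $\pi_I\otimes\pi_J$ then expresses $\ol M^{\bal,\bal'}_{\bbet,\bgam}$ as a product of simple and Graver generators, yielding the claim.

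To a monomial $M^{\bal,\bal'}_{\bbet,\bgam}$ I associate the pair $(v_+,v_-)\in\Nb^{2d}$ with $v_+:=\sum_{i=1}^{n}e_{\alpha_{i}}$ and $v_-:=\sum_{j=1}^{m}e_{\alpha'_{j}}$, recording the multiplicities of $\Kb[x]$- and $\Kb[y]$-variables appearing. That both tensor factors have $\Nb\Acal$-degree $a$ amounts to $\Acal v_+=a=\Acal v_-$, equivalently $(v_+,v_-)\in\Nb^{2d}\cap\ker_{\Zb}\hat\Acal$ for the column-extended configuration $\hat\Acal:=(\Acal,-\Acal)\in\Zb^{h\times 2d}$. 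The factorization problem thus becomes: decompose $(v_+,v_-)$ as a sum $\sum_{k}(u^{(k)},w^{(k)})$ of non-negative vectors with $\sum_{k}u^{(k)}=v_+$ and $\sum_{k}w^{(k)}=v_-$, where each summand is either of the simple shape $(e_{\alpha},e_{\alpha})$ or of the form $(g_+,g_-)$ for some $g\in\Gcal$.

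I would accomplish this by identifying the Graver basis $\hat\Gcal$ of $\hat\Acal$ and invoking its conformal decomposition property, which, for a non-negative element of $\ker_{\Zb}\hat\Acal$, produces a sum of non-negative Graver vectors bounded componentwise by the target. The key structural claim is
\begin{equation*}
  \hat\Gcal=\{(e_{\alpha},e_{\alpha}):\alpha\in[d]\}\cup\{(g_+,g_-):g\in\Gcal\}.
\end{equation*}
If a primitive $(u,w)\in\ker_{\Zb}\hat\Acal$ has $u_{\alpha},w_{\alpha}>0$ for some $\alpha$, then unless $(u,w)=(e_{\alpha},e_{\alpha})$ one can peel off the proper conformal summand $(e_{\alpha},e_{\alpha})$, contradicting primitivity. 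In the remaining case $u\wedge w=0$, setting $g:=u-w\in\ker_{\Zb}\Acal$ gives $u=g_+$, $w=g_-$, and conformal splittings of $(u,w)$ are in bijection with conformal splittings of $g$, so $(u,w)$ is primitive in $\ker_{\Zb}\hat\Acal$ if and only if $g\in\Gcal$.

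Once $\hat\Gcal$ is pinned down, the conformal Graver decomposition of $(v_+,v_-)$ yields the required partition of the variables of $M^{\bal,\bal'}_{\bbet,\bgam}$ into blocks forming simple or Graver monomials, and any assignment of the $\beta_i$'s and $\gamma_j$'s to these blocks produces an explicit factorization. The main obstacle is the structural identification of $\hat\Gcal$, and in particular verifying that conformality transfers in both directions across the correspondence $g\leftrightarrow(g_+,g_-)$; the remaining combinatorics is routine bookkeeping.
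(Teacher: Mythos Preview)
Your argument is sound and reaches the goal, but by a somewhat different route than the paper. The paper argues directly: given $M^{\bal,\bal'}_{\bbet,\bgam}$ with $\tilde c=\sum_ie_{\alpha_i}-\sum_je_{\alpha'_j}\in\ker_{\Zb}\Acal$, it first strips off simple monomials until the multisets $\{\alpha_i\}$ and $\{\alpha'_j\}$ are disjoint, and then applies the sign-consistent Graver decomposition $\tilde c=\sum_s g_s$ \emph{in $\ker_{\Zb}\Acal$ itself} to split what remains into Graver monomials. Your approach repackages these two steps into one: you pass to the auxiliary configuration $\hat\Acal=(\Acal,-\Acal)$ and perform a single conformal Graver decomposition of the non-negative vector $(v_+,v_-)$. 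This is a clean conceptual reformulation, at the cost of having to identify the relevant primitives of $\ker_{\Zb}\hat\Acal$; the paper's two-step version avoids that auxiliary computation and is slightly more elementary.

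One point to fix: your displayed description of $\hat\Gcal$ is not the full Graver basis of $\hat\Acal$. For instance, $(g,0)$ and $(0,g)$ for $g\in\Gcal$, as well as the negatives of everything listed, are also primitive in $\ker_{\Zb}\hat\Acal$; your dichotomy ``either $u_\alpha,w_\alpha>0$ for some $\alpha$, or $u\wedge w=0$'' tacitly assumes $u,w\ge 0$. What your case analysis actually establishes---and what you actually need---is that the \emph{non-negative} primitive elements of $\ker_{\Zb}\hat\Acal$ are exactly $\{(e_\alpha,e_\alpha):\alpha\in[d]\}\cup\{(g_+,g_-):g\in\Gcal\}$. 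Since a non-negative target decomposes conformally into non-negative Graver summands, this is enough, and rephrasing the claim accordingly makes the argument watertight.
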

\begin{proof}
  Since $\Gcal\subset\ker_{\Zb}\Acal$, every Graver generator belongs to~$R\SP S$.  It suffices to show that any
  monomial of the form $M^{\bal,\bal'}_{\bbet,\bgam}$, where $\tilde c = \sum_{i=1}^{n}e_{\alpha_{i}} -
  \sum_{j=1}^{m}e_{\alpha'_{j}}$ belongs to the kernel of~$\Acal$, is a product of simple monomials and Graver
  monomials.  If the intersection $\bal\cap\bal'$ is not empty, say
  $\bal\cap\bal'=(\alpha_{1},\dots,\alpha_{r})=(\alpha'_{1},\dots,\alpha'_{r})$ (as an intersection of multisets), then
  we can factor out a product of simple monomials and write
  \begin{equation*}
    M^{\bal,\bal'}_{\bbet,\bgam}
    =\prod_{i=1}^{r}(x^{\alpha_{i}}_{\beta_{i}}\otimes y^{\alpha_{i}}_{\gamma_{i}})\cdot
    M^{(\alpha_{r+1},\dots,\alpha_{n}),(\alpha'_{r+1},\dots,\alpha'_{m})}_{(\beta_{r+1},\dots,\beta_{n}),(\gamma_{r+1},\dots,\gamma_{m})}.
  \end{equation*}
  Hence, we may assume $\{\alpha_{i}\}\cap\{\alpha'_{j}\}=\emptyset$.  It suffices to show that all such monomials can
  be expressed as products of Graver monomials.  Since $\tilde c$ lies in the kernel of~$\Acal$, it can be written as a
  sign-consistent sum $\tilde c = \sum_{s=1}^{t}g_{s}$ of elements~$g_{s}\in\Gcal$.  Reorder the indices $\alpha_{i}$
  and $\alpha'_{j}$ such that $g_{s}=\sum_{i=n_{s-1}+1}^{n_{s}}e_{\alpha_{i}} -
  \sum_{j=m_{s-1}+1}^{m_{s}}e_{\alpha'_{j}}$.  Then
  \begin{equation*}
    M^{\bal,\bal'}_{\bbet,\bgam}
    = \prod_{s=1}^{t} M^{(\alpha_{n_{s-1}+1},\dots,\alpha_{n_{s}}),(\alpha'_{m_{s-1}+1},\dots,\alpha'_{m_{s}})}_{(\beta_{n_{s-1}+1},\dots,\beta_{n_{s}}),(\gamma_{m_{s-1}+1},\dots,\gamma_{m_{s}})},
  \end{equation*}
  where the right hand side is a product of Graver monomials.
\end{proof}
If $I=0$ and $J=0$, then the generating set of $R\SP S$ presented in Proposition~\ref{prop:Segre-generators} is minimal.
Otherwise, the relations in $I$ and $J$ may allow to express some generators in terms of the other generators.  

\begin{ex}
  \label{ex:Segre-generators}
  Let $d=3$, $\Acal=(1,1,1)$, and consider the rings $R=\Kb[x^{1},x^{2},x^{3}]/\langle x^{1}\rangle$ and
  $S=\Kb[y^{1},y^{2},y^{3}]/\langle y^{1}y^{3}\rangle$.  The Graver basis of~$\ker_{\Zb}\Acal$ consists of
  $e_{\alpha_{1}}-e_{\alpha_{2}}$ for all~$\alpha_{1}\neq\alpha_{2}$.  According to
  Proposition~\ref{prop:Segre-generators}, the Segre product $R\SP S$ is isomorphic to
  \begin{equation*}
	\Kb[z^{11},z^{22},z^{33},z^{12},z^{13},z^{21},z^{23},z^{31},z^{32}]/K
  \end{equation*}
  for some ideal~$K$.  Here, $z^{\alpha\beta}$ is a simple generator if $\alpha=\beta$ and a Graver generator if~$\alpha\neq\beta$.
  Using properties of the tensor product, it is not difficult to see that
  \begin{equation*}
	K = \langle z^{1\beta}, z^{2\beta}z^{3\beta'}-z^{2\beta'}z^{3\beta}, z^{\alpha1}z^{\alpha'3} : \beta,\beta'\in\{1,2,3\},\alpha,\alpha'\in\{2,3\} \rangle.
  \end{equation*}
\end{ex}

\section{Relations between toric fiber products and Segre products}
\label{sec:relations}

There are many relations between toric fiber products and Segre products.  First,
Proposition~\ref{prop:Segre-generators} implies the following result from~\cite{TFP-II}:
\begin{lemma}
  There is a canonical injective homomorphism $\Kb[z]/(I\TFP J)\to R\SP S$.
  If $\codim(\Acal)=0$, then $\Kb[z]/(I\TFP J)\cong R\SP S$.
\end{lemma}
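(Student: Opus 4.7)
The plan is to unpack the definitions: the canonical map is the one induced by $(\pi_I\otimes\pi_J)\circ\phi_S$ itself, so the first task is merely to check that its image lands in $R\SP S\subseteq R\otimes S$. Since $\phi_S(z^{\alpha}_{\beta,\gamma})=x^{\alpha}_\beta\otimes y^{\alpha}_\gamma$ is an element of $\Kb[x]_{a_\alpha}\otimes\Kb[y]_{a_\alpha}$, it maps into $R_{a_\alpha}\otimes S_{a_\alpha}$ under $\pi_I\otimes\pi_J$. A monomial in the $z^\alpha_{\beta,\gamma}$ therefore maps to an element of $R_a\otimes S_a$ with $a = \sum_i a_{\alpha_i}\in\Nb\Acal$, hence to $R\SP S$ by the definition of the Segre product. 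So the composition factors as $\Kb[z]\to R\SP S\hookrightarrow R\otimes S$.

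Next I would identify the kernel. By Definition~\ref{d:fiber}, $\ker\bigl((\pi_I\otimes\pi_J)\circ\phi_S\bigr)=I\TFP J$, so the induced map $\Kb[z]/(I\TFP J)\to R\SP S$ is injective. This gives the first assertion; its canonicity is clear since $\phi_S$, $\pi_I$ and $\pi_J$ are themselves canonical once the presentations of $R$ and $S$ are fixed.

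For the second assertion, I would use Proposition~\ref{prop:Segre-generators}. The hypothesis $\codim(\Acal)=0$ means $\ker_\Zb\Acal=\{0\}$, so the Graver basis $\Gcal$ is empty and there are no Graver generators. Hence $R\SP S$ is generated as a $\Kb$-algebra by the simple generators $\ol M^{\alpha,\alpha}_{\beta,\gamma}=\oxab\otimes\oyag$, which are exactly the images of the variables $z^{\alpha}_{\beta,\gamma}$. Consequently the injection is also surjective, and therefore an isomorphism.

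The argument is essentially a bookkeeping exercise once Proposition~\ref{prop:Segre-generators} is available; the only point that could be easy to overlook is that the image of the composition truly lands inside the Segre product (and not merely in $R\otimes S$), which is what makes the map well-defined as a homomorphism into $R\SP S$. The surjectivity step in the $\codim(\Acal)=0$ case is immediate from the characterization of Graver bases as primitive elements of $\ker_\Zb\Acal$.
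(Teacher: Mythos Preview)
Your proof is correct and follows the same approach as the paper: identify $\Kb[z]/(I\TFP J)$ with the subalgebra of $R\SP S$ generated by the simple generators, and then invoke Proposition~\ref{prop:Segre-generators} to conclude that when $\codim(\Acal)=0$ there are no Graver generators, so this subalgebra is all of~$R\SP S$. You have simply made explicit the verification that the image of $(\pi_I\otimes\pi_J)\circ\phi_S$ lands in $R\SP S$, which the paper leaves implicit.
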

\begin{proof}
  $\Kb[z]/(I\TFP J)$ is isomorphic to the subalgebra of $R\SP S$ generated by the \hyperlink{simpGen}{simple
    generators}.  If $\codim(\Acal)=0$, then there are no Graver generators and $R\SP S$ is generated by the simple
  generators.
\end{proof}
Under assumptions on~$\Acal$ the toric fiber product can be computed from the Segre product:
\begin{lemma}
  \label{lem:canonical-TFP}
  Assume ~\eqref{eq:condA} and that all elements of $\Acal$ are pair-wise different.
  $\Kb[z]/(I\TFP J)$ is the subring of $R\SP S$ generated by $\sum_{a\in\Acal}R_{a}\otimes S_{a}$.  In other words,
  $\Kb[z]/(I\TFP J)$ is the subring of $R\SP S$ generated by all elements of total degree one.
\end{lemma}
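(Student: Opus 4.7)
The plan is to leverage the previous lemma, which identifies $\Kb[z]/(I\TFP J)$ with the subring of $R\SP S$ generated by the simple generators $\oxab\otimes\oyag$. It then suffices to show that, under the two hypotheses, the simple generators span precisely $\sum_{a\in\Acal}R_a\otimes S_a$ as a $\Kb$-vector space.

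The inclusion $\subseteq$ is immediate, since $\oxab\otimes\oyag$ lies in $R_{a_\alpha}\otimes S_{a_\alpha}$. The content is the reverse inclusion. Here I would use condition~\eqref{eq:condA} together with pairwise distinctness of the $a_\alpha$ to argue that every homogeneous element of $R$ of degree $a_\alpha$ is a linear combination of the $\oxab$ with $\beta\in[d_\alpha]$, and symmetrically for $S$. Concretely: \eqref{eq:condA} equips $\Nb\Acal$ with a compatible $\Nb$-grading in which each $a_{\alpha'}$ has total degree one, so an element $a\in\Nb\Acal$ of total degree one admits a unique representation $a=\sum_{\alpha'}c_{\alpha'}a_{\alpha'}$ with exactly one $c_{\alpha'}=1$; pairwise distinctness then forces $a=a_\alpha$ to come from the single index $\alpha$. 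Consequently, $R_{a_\alpha}$ is the $\Kb$-span of the images of the variables $x^\alpha_\beta$, and likewise for $S_{a_\alpha}$. Taking tensor products, $R_{a_\alpha}\otimes S_{a_\alpha}$ is spanned by the simple generators, which is exactly what we need.

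Summing over $\alpha\in[d]$ and observing that $\Acal=\{a_\alpha:\alpha\in[d]\}$ by distinctness yields the desired equality $\sum_{a\in\Acal}R_a\otimes S_a=\Kb$-$\operatorname{span}\{\oxab\otimes\oyag\}$. For the final reformulation, the $\Nb$-grading induced by~\eqref{eq:condA} on $R\SP S$ has $R_a\otimes S_a$ sitting in total degree equal to the height of $a$; the total degree one part is therefore $\sum_{a\in\Acal}R_a\otimes S_a$, because an element $\sum c_{\alpha'}a_{\alpha'}$ of $\Nb\Acal$ has total degree one precisely when a single $c_{\alpha'}$ equals one.

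The only mild obstacle is keeping clear the interplay between the $\Nb\Acal$-grading and the auxiliary $\Nb$-grading supplied by~\eqref{eq:condA}, and explaining why pairwise distinctness of $\Acal$ is needed: without it, a degree $a_\alpha$ piece could receive contributions from variables with a different index $\alpha'\neq\alpha$ having $a_{\alpha'}=a_\alpha$, and these contribute additional degree-one elements of $R\SP S$ that are not products of simple generators (as Example~\ref{ex:dep-on-pres} already illustrates).
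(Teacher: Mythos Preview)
Your proof is correct and follows essentially the same route as the paper's own argument: use~\eqref{eq:condA} to pass to the total $\Nb$-grading, identify the degree-one part of $R\SP S$ with $\sum_{a\in\Acal}R_a\otimes S_a$, and use pairwise distinctness to see that this is exactly the $\Kb$-span of the simple generators. Your write-up is in fact more explicit than the paper's about where each hypothesis is used, which is a plus.
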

\begin{proof}
  If~\eqref{eq:condA} holds, then $I,J$ are homogeneous with respect to the total grading, under which each variable
  $x^{\alpha}_{\beta},y^{\alpha}_{\gamma}$ has degree one.  The variables $z^{\alpha}_{\beta,\gamma}$ also have degree
  one, and hence $\Kb[z]/(I\TFP J)$ is generated in total degree one.  Moreover, any element of total degree one in
  $\Kb[x]\otimes\Kb[y]$ is a sum of monomials of the form $x^{\alpha}_{\beta}\otimes y^{\alpha}_{\gamma}$, and therefore
  each element of $R\SP S$ of total degree one lies in~$\Kb[z]/(I\TFP J)$.
\end{proof}

If~\eqref{eq:condA} is not imposed, then any Segre product is a quotient of a toric fiber product
(Proposition~\ref{prop:SP-as-TFP}).  However, not every toric fiber product is a Segre product.  As shown
in~\cite{TFP-II}, the Segre product of two integrally closed rings is integrally closed, but toric fiber products of
normal ideals are not necessarily normal, see Section~\ref{exs:norm-tfp-norm}.
\begin{prop}
  \label{prop:SP-as-TFP}
  For any Segre product $R\SP S$ there are a matrix $\Acal'$ with $\Nb\Acal'=\Nb\Acal$ and two $\Nb\Acal'$-homogeneous
  ideals $I'\subseteq\Kb[x']$ and $J'\subseteq\Kb[y']$ in $\Nb\Acal'$-graded rings $\Kb[x'],\Kb[y']$ such that
  \begin{align*}
    \Kb[x']/I' \cong R, & \qquad
    \Kb[y']/J' \cong S,  \\
    R\SP S &\cong \Kb[z']/(I'\TFP J').
  \end{align*}
\end{prop}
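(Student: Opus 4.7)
The plan is to enlarge the vector configuration $\Acal$ to a configuration $\Acal'$ with the same affine monoid, $\Nb\Acal' = \Nb\Acal$, so that every Graver generator of $R\SP S$ is promoted to a \emph{simple} generator of the enlarged presentation. The first (unlabeled) lemma of Section~\ref{sec:relations} then immediately implies that the canonical injection $\Kb[z']/(I'\TFP J') \hookrightarrow R\SP S$ is in fact an isomorphism.

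Concretely, let $\Gcal$ be the Graver basis of $\ker_{\Zb}\Acal$. For each $g = \sum_{i=1}^{n}e_{\alpha_{i}} - \sum_{j=1}^{m}e_{\alpha'_{j}} \in \Gcal$, the vector $a_{g} := \sum_{i}a_{\alpha_{i}} = \sum_{j}a_{\alpha'_{j}}$ lies in $\Nb\Acal$ (the two sums agree because $\Acal g = 0$). I would take $\Acal'$ to be the matrix obtained from $\Acal$ by adjoining one new column equal to $a_{g}$ for each $g\in\Gcal$; then $\Nb\Acal'=\Nb\Acal$ automatically. Enlarge $\Kb[x]$ to $\Kb[x']$ by adjoining, for each $g\in\Gcal$ and each $x$-side Graver index sequence $\bbet$ of $g$, a new variable $x^{g}_{\bbet}$ of $\Nb\Acal'$-degree $a_{g}$, and analogously enlarge $\Kb[y]$ to $\Kb[y']$. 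Define the $\Nb\Acal'$-graded surjection $\psi_{x}\colon\Kb[x']\to R$ by $x^{\alpha}_{\beta}\mapsto\oxab$ and $x^{g}_{\bbet}\mapsto\prod_{i=1}^{n}\ol{x}^{\alpha_{i}}_{\beta_{i}}$, and its analogue $\psi_{y}\colon\Kb[y']\to S$. Both are well-defined because the right-hand sides have the prescribed degrees. Setting $I'=\ker\psi_{x}$ and $J'=\ker\psi_{y}$ produces $\Nb\Acal'$-homogeneous ideals with $\Kb[x']/I'\cong R$ and $\Kb[y']/J'\cong S$.

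Now let $\Kb[z']$ be the polynomial ring associated with $\Acal'$ in the sense of Definition~\ref{d:fiber}, and let $\phi_{S'}\colon\Kb[z']\to\Kb[x']\otimes\Kb[y']$ be the analogue of $\phi_{S}$ for $\Acal'$. The composition $\Phi := (\pi_{I'}\otimes\pi_{J'})\circ\phi_{S'}$ has kernel $I'\TFP J'$ by definition. An old-column variable $z^{\alpha}_{\beta,\gamma}$ maps under $\Phi$ to the simple generator $\oxab\otimes\oyag$ of $R\SP S$, while each new-column variable $z^{g}_{\bbet,\bgam}$ maps to the Graver generator $\ol{M}^{g}_{\bbet,\bgam}$. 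By Proposition~\ref{prop:Segre-generators}, these two families together generate $R\SP S$, so $\im\Phi = R\SP S$ and $\Kb[z']/(I'\TFP J') \cong R\SP S$.

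The main obstacle is conceptual rather than computational: one must recognise that the only obstruction preventing the canonical injection of the first lemma in Section~\ref{sec:relations} from being an isomorphism is the presence of Graver generators that are not simple, and that one can convert each Graver generator into a simple one by inflating $\Acal$ without altering the grading monoid $\Nb\Acal$. The remaining verifications---well-definedness of $\phi_{S'}$ and $\Nb\Acal'$-homogeneity of $I'$ and $J'$---all reduce to the degree identity $\sum_{i}a_{\alpha_{i}}=\sum_{j}a_{\alpha'_{j}}$ that holds for every $g\in\Gcal$.
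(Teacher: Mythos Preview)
Your proof is correct and follows essentially the same approach as the paper. The paper makes the ideal $I'$ explicit as $I\Kb[x,u] + \langle u^{g}_{\bbet}-\prod_{i}x^{\alpha_{i}}_{\beta_{i}}\rangle$ and phrases the final step via the isomorphism $R\otimes S\cong R'\otimes S'$, whereas you define $I'=\ker\psi_{x}$ and compute $\im\Phi$ directly; these are the same construction and both conclude by invoking Proposition~\ref{prop:Segre-generators}.
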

\begin{proof}
  Let $\Gcal$ be the Graver basis of~$\Acal$.  Let $\Kb[x,u]=\Kb[x][u^{g}_{\bbet}]$ and $\Kb[y,v]=\Kb[y][v^{g}_{\bgam}]$
  be the ring extensions of $\Kb[x]$ and $\Kb[y]$ with one additional variable for each $g\in\Gcal$ and each pair of
  Graver index sequences $\bbet$ and~$\bgam$.  Moreover, let $I'\subseteq\Kb[x,u]$ be the ideal generated by $I\Kb[x,u]$
  and the relations $u^{g}_{\bbet}-\prod_{i=1}^{n}x^{\alpha_{i}}_{\beta_{i}}$.  Similarly, let $J'\subseteq\Kb[y,v]$ be
  the ideal generated by $J\Kb[y,v]$ and the relations $v^{g}_{\bgam}-\prod_{j=1}^{m}y^{\alpha'_{j}}_{\gamma_{j}}$.
  Then $R\cong R':=\Kb[x,u]/I'$ and $S\cong S' := \Kb[y,v]/J'$.

  For each $g\in\Gcal$ let $a'_{g}=\sum_{\alpha:g_{\alpha}>0}c_{\alpha}a_{\alpha}$, and let
  $\Acal'=\Acal\cup\{a'_{g}:g\in\Gcal\}$.  Then $\Nb\Acal=\Nb\Acal'$, and the ideals $I'$ and $J'$ are homogeneous with
  respect to the $\Nb\Acal'$-grading that extends the $\Nb\Acal$-grading of $\Kb[x]$ and $\Kb[y]$ via
  $\deg(u^{g}_{\bbet}) = \deg(v^{g}_{\bgam}) = a'_{g}$.  The claim is that $R\SP S\cong \Kb[z']/(I'\TFP J')$.
  The natural injection $\Kb[x]\otimes\Kb[y]\to\Kb[x,u]\otimes\Kb[y,v]$ induces a map $R\otimes S\to R'\otimes S'$,
  which agrees with the isomorphism $R\otimes S\cong R'\otimes S'$ coming from the isomorphisms $R\cong R'$
  and~$S\cong S'$.  Hence, we obtain an injection $\iota:R\SP S\to R'\otimes S'$ which (upon restriction to the image)
  equals the isomorphism $\iota:R\SP S \xrightarrow{\sim} R'\SP[\Nb\Acal']S'$ of the two Segre products.  To complete
  the proof it suffices to show that $\iota$ maps each generator of $R\SP S$ to a generator of $\Kb[z']/(I'\TFP J')$.
  Observe that $\iota$ maps the simple generator $\ol M^{\alpha,\alpha}_{\beta,\gamma}\in R\SP S$ to the simple
  generator $\ol M^{\alpha,\alpha}_{\beta,\gamma}\in R'\SP S'$, and the Graver generator $\ol M^{g}_{\bbet,\bgam}\in
  R\SP S$ is mapped to
  \begin{equation*}
    \ol M^{g}_{\bbet,\bgam}
    = \prod_{i=1}^{n}\ol x^{\alpha_{i}}_{\beta_{i}}\otimes\prod_{j=1}^{m}\ol y^{\alpha'_{j}}_{\gamma_{j}}
    = \ol u^{g}_{\bbet}\otimes\ol v^{g}_{\bgam}\,,
  \end{equation*}
  which is a simple generator of~$R'\SP S'$.
\end{proof}
In concrete instances the proof of Proposition~\ref{prop:SP-as-TFP} can be simplified by choosing a smaller matrix
$\Acal'$ (Remark~\ref{rem:choice-Ap}). 
The crucial point is to ensure that the image of $\iota$ be contained in the set of simple generators.  It suffices to
check this for a minimal generating set. 

To sum up, the situation is the following: Let $\Lcal$ be an affine monoid, and let $R,S$ be $\Lcal$-graded affine
rings.  The Segre product $R\SP[\Lcal]S$ is well-defined.  Any choice of vector configuration $\Acal$ with
$\Nb\Acal=\Lcal$ and any choice of homogeneous generating sets $\{x^{\alpha}_{\beta}\}\subset R$ and
$\{y^{\alpha}_{\gamma}\}\subset S$ yields presentations $R\cong\Kb[x]/I$ and $S\cong\Kb[y]/J$ as quotients of polynomial
rings.  This data allows to construct a toric fiber product ring $\Kb[z]/(I\TFP J)$.  Each such toric fiber product is a
subring of the Segre product~$R\SP[\Lcal]S\cong(\Kb[x]/I)\SP(\Kb[y]/J)$.  By Proposition~\ref{prop:SP-as-TFP}, the Segre
product itself also arises in this way.  If $\Lcal$ has a generating set $\Acal$ satisfying~\eqref{eq:condA}, then any
$\Lcal$-graded ring has a total grading with values in~$\Nb$.  If $R$ and $S$ are generated in degree one, then, by
Lemma~\ref{lem:canonical-TFP}, a canonical choice of generators for the toric fiber product consists of the degree one
elements.

\begin{rem}
  According to~\cite[Theorem~3.1]{TFP-II}, if the ideals $I,J$ can be written as intersections $I=\bigcap_iI_i$ and
  $J=\bigcap_jJ_j$, then $I\times_\Acal J = \bigcap_i\bigcap_j I_i\times_\Acal J_j$.  If all ideals $I_i$ and $J_j$ are
  geometrically primary, then $I_i\times_\Acal J_j$ is also geometrically primary, and $I\times_\Acal J =
  \bigcap_i\bigcap_j I_i\times_\Acal J_j$ is a (possibly redundant) primary decomposition.

  A primary decomposition of the Segre product can be obtained similarly.  In the construction of
  Proposition~\ref{prop:SP-as-TFP}, if $I$ is (geometrically) primary, then $I'$ is also (geometrically)
  primary.  Therefore, decompositions $I=\bigcap_iI_i$ and $J=\bigcap_jJ_j$ into geometrically
  primary ideals yield decompositions $I'=\bigcap_iI'_i$ and~$J'=\bigcap_jJ'_j$ and thus a primary decomposition $I'\TFP
  J'=\bigcap_i\bigcap_j I'_i\TFP J'_j$.  Proposition~\ref{prop:SP-as-TFP} and its proof show that $I'\TFP J'$ is a
  defining ideal of~$R\SP S$ and that $I'_i\TFP J'_j$ is a defining ideal of~$(\Kb[x]/I_i)\SP(\Kb[y]/J_j)$.  In this
  sense, the Segre product preserves primary decompositions into geometrically primary ideals.

  The two decompositions $I\times_\Acal J = \bigcap_i\bigcap_j I_i\times_\Acal J_i$ and $I'\TFP J'=\bigcap_i\bigcap_j
  I'_i\TFP J'_j$ have the same number of components.  As discussed in~\cite{TFP-II}, these decompositions may contain
  redundant components, even if the original decompositions of $I$ and $J$ are not redundant.  Therefore, irredundant
  primary decompositions of the toric fiber product and the Segre product may have different numbers of components.  In
  fact, toric fiber products of different ideals with isomorphic quotient rings may have primary decompositions with
  different numbers of components:
\end{rem}
\begin{ex}
  Let $d=3$, $\Acal=(1,1,1)$ and $d_{\alpha}=1=d'_{\alpha}$ for~$\alpha=1,2,3$.  Consider the ideals
  \begin{equation*}
	I_{1}= \langle x^{1}\rangle, \qquad  I_{2}= \langle x^{2}\rangle, \qquad  J= \langle y^{1}y^3\rangle.
  \end{equation*}
  The graded rings $R_{1} :=\Kb[x^{1},x^{2},x^{3}]/I_{1}$ and $R_{2} :=\Kb[x^{1},x^{2},x^{3}]/I_{2}$ are isomorphic.
  The ideals $I_1$ and $I_2$ are prime, and the primary decomposition of $J$ is~$J=\langle y^1\rangle\cap\langle
  y^3\rangle$.  As in Example~\ref{ex:dep-on-pres}, the toric fiber product corresponds to the sum of the ideals, and so
  \begin{equation*}
	I_1\TFP J = \langle z^1 \rangle, \qquad
	I_2\TFP J = \langle z^2, z^1z^3 \rangle.
  \end{equation*}
  In particular, one toric fiber product is prime, while the other has a nontrivial primary decomposition.  By
  Example~\ref{ex:Segre-generators}, the two Segre products $R_1\SP S$ and $R_2\SP S$, where $S=\Kb[y]/J$, are
  isomorphic to
  \begin{equation*}
	\Kb[z^{22},z^{33},z^{21},z^{23},z^{31},z^{32}]
	/ \langle z^{2\beta}z^{3\beta'}-z^{2\beta'}z^{3\beta}, z^{\alpha1}z^{\alpha'3} : \beta,\beta'\in\{1,2,3\},\alpha,\alpha'\in\{2,3\} \rangle.
  \end{equation*}
  The defining ideal of the Segre product has two primary components.
\end{ex}
\begin{rem}\label{r:compatProj}
  If $I$ and $J$ are toric ideals, then Theorem~4.12 in~\cite{TFP-II} shows how to compute a generating set of $I\TFP J$
  from particular generating sets $\Fcal_1\subset I$ and~$\Fcal_2\subset J$ that satisfy a compatibility property,
  called \emph{compatible projection property}.  In principle, this theorem can be used together with
  Proposition~\ref{prop:SP-as-TFP} to compute a generating set of the defining ideal of the Segre product.  From the
  definitions of $I'$ and $J'$ it is direct how to construct generating sets $\Fcal'_1\subset I'$ and~$\Fcal'_2\subset
  J'$.  Unfortunately, $\Fcal'_1$ and $\Fcal'_2$ usually do not satisfy the compatible projection property, even if
  $\Fcal_1$ and $\Fcal_2$ have this property.
\end{rem}

\section{The neutral elements of toric fiber products and Segre products}
\label{sec:neutral-elements}

For fixed $\Acal$ both the toric fiber product and the Segre product are associative and commutative (up to natural
isomorphisms).  The next two lemmas show that the toric ideal \enlargethispage{3ex}
\begin{equation*}
  I_{\Acal} = \langle x^{c^{+}} - x^{c^{-}} : c^{+}-c^{-}\in\ker_{\Zb}\Acal\rangle \subseteq \Kb[x^{1},\dots,x^{d}]
\end{equation*}
is the neutral element of the toric fiber product, and the quotient $R_{\Acal}=\Kb[x^{1},\dots,x^{d}]/I_{\Acal}$ is the
neutral element of the Segre product.  Here, $x^{c^{+}}$ is shorthand for
$\prod_{\alpha=1}^{d}(x^{\alpha})^{c^{+}_{\alpha}}$.  Hence, the two constructions turn the set of all $\Nb\Acal$-graded
rings and the set of all $\Nb\Acal$-graded ideals into commutative monoids.

\begin{lemma}
  The ring $R_{\Acal}$ satisfies $R_{\Acal}\SP S\cong S$ for all $\Nb\Acal$-graded rings~$S$.
\end{lemma}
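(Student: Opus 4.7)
The plan is to identify $R_{\Acal}$ with the affine monoid algebra $\Kb[\Nb\Acal]$ and deduce the isomorphism directly from the graded-component decomposition of the Segre product. Recall that, by a standard property of toric ideals, $I_{\Acal}$ is exactly the kernel of the surjection $\Kb[x^{1},\dots,x^{d}]\to\Kb[\Nb\Acal]$ sending $x^{\alpha}\mapsto t^{a_{\alpha}}$, where $t^{a}$ denotes the basis element of $\Kb[\Nb\Acal]$ at $a\in\Nb\Acal$. Hence $R_{\Acal}\cong\Kb[\Nb\Acal]$ as $\Nb\Acal$-graded $\Kb$-algebras, and in particular the graded component $(R_{\Acal})_{a}$ is one-dimensional, spanned by $t^{a}$, for every $a\in\Nb\Acal$.

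With this identification, the Segre product decomposes as
\[
R_{\Acal}\SP S \;=\; \bigoplus_{a\in\Nb\Acal}(R_{\Acal})_{a}\otimes S_{a} \;=\; \bigoplus_{a\in\Nb\Acal}\Kb\,t^{a}\otimes S_{a},
\]
and each summand is canonically identified with $S_{a}$ via $t^{a}\otimes s\mapsto s$. I would define $\phi\colon R_{\Acal}\SP S\to S$ as the $\Kb$-linear extension of this assignment. Then $\phi$ is manifestly a graded $\Kb$-linear isomorphism onto $S=\bigoplus_{a\in\Nb\Acal}S_{a}$, with inverse $s\mapsto t^{a}\otimes s$ on homogeneous $s\in S_{a}$.

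To check that $\phi$ is a ring homomorphism, I would use the monoid-law multiplication $t^{a}\cdot t^{b}=t^{a+b}$ in $R_{\Acal}$ to compute, for homogeneous $s\in S_{a}$ and $s'\in S_{b}$,
\[
(t^{a}\otimes s)(t^{b}\otimes s') \;=\; t^{a+b}\otimes ss',
\]
which $\phi$ sends to $ss'=\phi(t^{a}\otimes s)\cdot\phi(t^{b}\otimes s')$. Extending bilinearly gives multiplicativity on all of $R_{\Acal}\SP S$.

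There is no real obstacle in this argument; the whole statement is essentially forced once one recognises $R_{\Acal}$ as the affine monoid algebra of $\Nb\Acal$. The only point deserving even minor attention is the identification of graded components, which amounts to noting that the binomial generators of $I_{\Acal}$ impose no relations beyond those already encoded by the monoid structure on $\Nb\Acal$, so that each $(R_{\Acal})_{a}$ is exactly one-dimensional.
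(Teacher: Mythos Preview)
Your proof is correct and follows essentially the same approach as the paper: both arguments rest on the observation that each graded component $(R_{\Acal})_{a}$ is one-dimensional, so that $(R_{\Acal})_{a}\otimes S_{a}\cong S_{a}$. Your version is simply more explicit, spelling out the identification $R_{\Acal}\cong\Kb[\Nb\Acal]$ and verifying multiplicativity of the map, whereas the paper's proof is a terse two lines leaving these checks to the reader.
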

\begin{proof}
  By definition of~$I_{\Acal}$, the homogeneous component $(R_{\Acal})_{a}$ is one-dimensional for all~$a\in\Nb\Acal$.
  Therefore, $S_{a}\cong (R_{\Acal})_{a}\otimes S_{a}$ for all $a\in\Nb\Acal$, and thus~$S\cong R_{\Acal}\SP S$.
\end{proof}

\begin{lemma}
  Let $I_{\Acal}$ be the toric ideal defined by~$\Acal$.  Let $J \subset \Kb[y]$ be any $\Nb\Acal$-graded ideal.  Then
  $I_{\Acal}\times_{\Acal} J \cong J$.
\end{lemma}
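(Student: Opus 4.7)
The strategy is to identify the polynomial ring $\Kb[z]$ underlying $I_{\Acal}\TFP J$ with $\Kb[y]$ in a canonical way, and then to recognize the map that defines the toric fiber product as $\pi_J$ under this identification. Since the polynomial ring containing $I_{\Acal}$ has $d_{\alpha}=1$ for every $\alpha\in[d]$, the ring $\Kb[z]$ has variables $z^{\alpha}_{1,\gamma}$ for $\alpha\in[d]$ and $\gamma\in[d'_{\alpha}]$. Hence $z^{\alpha}_{1,\gamma}\mapsto y^{\alpha}_{\gamma}$ defines a canonical ring isomorphism $\sigma\colon\Kb[z]\xrightarrow{\sim}\Kb[y]$, and it suffices to show that $\sigma(I_{\Acal}\TFP J)=J$.

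By Definition~\ref{d:fiber}, $I_{\Acal}\TFP J$ is the kernel of $\Psi:=(\pi_{I_{\Acal}}\otimes\pi_J)\circ\phi_S\colon\Kb[z]\to R_{\Acal}\otimes S$, and $\Psi$ sends $z^{\alpha}_{1,\gamma}$ to the simple generator $\ol x^{\alpha}\otimes\ol y^{\alpha}_{\gamma}$. In particular the image of $\Psi$ lies in $R_{\Acal}\SP S$, so $\Psi$ factors through the canonical injection $\Kb[z]/(I_{\Acal}\TFP J)\hookrightarrow R_{\Acal}\SP S$ of the first lemma of Section~\ref{sec:relations}. I would then invoke the preceding lemma to identify $R_{\Acal}\SP S\cong S$; the isomorphism $\mu$ is induced by the one-dimensionality of $(R_{\Acal})_a$, and explicitly sends $\ol x^{\alpha}\otimes\ol y^{\alpha}_{\gamma}$ to $\ol y^{\alpha}_{\gamma}$, because $\ol x^{\alpha}$ is a generator of the one-dimensional component $(R_{\Acal})_{a_{\alpha}}$.

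Composing, $\mu\circ\Psi\colon\Kb[z]\to S$ sends $z^{\alpha}_{1,\gamma}\mapsto\ol y^{\alpha}_{\gamma}$, so under $\sigma$ it coincides with $\pi_J$; its kernel is therefore $\sigma^{-1}(J)$. On the other hand, since $\mu$ is an isomorphism, $\ker(\mu\circ\Psi)=\ker\Psi=I_{\Acal}\TFP J$. Comparing the two descriptions of this kernel gives $\sigma(I_{\Acal}\TFP J)=J$, as required. There is no serious obstacle in this plan; the only step demanding care is the explicit description of $\mu$ on simple generators, which follows directly from the one-dimensionality of the graded components of $R_{\Acal}$ used in the previous lemma.
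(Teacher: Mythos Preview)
Your argument is correct and follows essentially the same line as the paper's proof: both identify $\Kb[z]$ with $\Kb[y]$ via the obvious bijection of variables and then show that, under this identification, the defining kernel of the toric fiber product is exactly~$J$, the key input being the one-dimensionality of each graded piece $(R_{\Acal})_{a}$.

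The only difference is in packaging. You route the argument through the preceding lemma, composing $\Psi$ with the isomorphism $\mu\colon R_{\Acal}\SP S\xrightarrow{\sim}S$ and recognising the result as $\pi_{J}\circ\sigma$. The paper instead stops one step earlier, working with the map $\phi\colon\Kb[z]\to R_{\Acal}\otimes\Kb[y]$, $z^{\alpha}_{\gamma}\mapsto\ol{x^{\alpha}}\otimes y^{\alpha}_{\gamma}$, and checks directly that $\phi^{-1}\big(J\,(R_{\Acal}\otimes\Kb[y])\big)=\psi^{-1}(J)$ by writing a homogeneous element of the image as $m_{a}\otimes f(y)$ with $m_{a}$ the unique standard monomial of degree~$a$. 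Your version is a little slicker because it reuses the previous lemma rather than redoing its content; the paper's version is self-contained. Either way the substance is identical.
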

\begin{proof}
  Under the assumptions of the lemma, the index $\beta$ can take only one value ($d'_{\alpha}=1$), so it can be omitted in
  the following.
  Let $\phi: \Kb[z] \to \Kb[x,y]/I_{\Acal}\cong R_{\Acal}\otimes\Kb[y], z^{\alpha}_{\gamma} \mapsto \ol{x^{\alpha}}\otimes y^{\alpha}_{\gamma}$ be the natural
  homomorphism and $\psi : \Kb[z] \to \Kb[y]$ the natural isomorphism.  We need to show that $\phi^{-1}(J(R_{\Acal}\otimes\Kb[y])) =
  \psi^{-1}(J)$.

  Let $f\in\psi^{-1}(J)$ be a homogeneous element of degree~$a\in\Nb\Acal$.  Then $\phi (f) = m_{a}\otimes f(y)$, where
  $m_{a}$ is the unique standard monomial of $R_{\Acal}$ of degree~$a$.  In particular, $\phi(f)\in J(R_{\Acal}\otimes\Kb[y])$, and thus
  $\psi^{-1}(J)\subseteq\phi^{-1}(J(R_{\Acal}\otimes\Kb[y]))$.
  Conversely, any element in the image of $\phi$ is of the form $g = \sum_{a\in\Nb\Acal} m_{a}\otimes f_{a}(y)$,
  where $f_{a}$ is a polynomial in~$\Kb[y]$ (and where all but a finite number of terms vanish in this sum).  Now, $g\in
  J(R_{\Acal}\otimes\Kb[y])$ if and only if $f_{a}\in J$ for all~$a\in\Nb\Acal$.  In this case $g=\phi(\psi^{-1}(\sum_{a}f))$,
  showing that $\phi^{-1}(J(R_{\Acal}\otimes\Kb[y]))\subseteq\psi^{-1}(J)$.  
\end{proof}

\section{Affine monoid rings and toric ideals}
\label{sec:toric-ideals}

Each vector configuration $\Bcal=(b^{\alpha}_{\beta})_{\alpha\in[d],\beta\in[d_{\alpha}]}\subset\Zb^{h_{1}}$ defines a
\emph{toric ideal}
\begin{equation*}
  I = I_{\Bcal} = \langle x^{c^{+}} - x^{c^{-}} : c^{+}-c^{-}\in\ker_{\Zb}\Bcal\rangle \subseteq \Kb[x]
\end{equation*}
which presents the affine monoid ring $\Kb[\Nb\Bcal] \cong \Kb[x]/I_{\Bcal}$.  Many algebraic properties of affine
monoid rings and toric ideals are combinatorial---they can be translated into combinatorial properties of the affine
monoid.  Corresponding properties of affine monoid rings and toric ideals are usually given the same name; and it is
then a theorem to see that an affine monoid $L$ has a property if and only if $\Kb[L]$ has the ring property with the
same name.  See~\cite[Theorem~4.42]{bruns09:poly_k_theory} for a very general version of this correspondence.
For instance, this works for the following definitions:
\begin{defi}\label{d:normalMonoid}
  If $L\subset K$ are affine monoids, then $L$ is \emph{integrally closed in $K$} if it is equal to its \emph{integral
	closure} $\{l \in K: nl \in L, n\in\Nb_{+} \}$ in~$K$.  An affine monoid $L$ is \emph{normal} if it is integrally
  closed in its universal group~$\gp(L)$.
\end{defi}

The statements of Sections~\ref{sec:generators-SP} and~\ref{sec:relations} have combinatorial formulations too.  To see
them, we first combinatorialize the product setup.  Let
$\Ccal=(c^{\alpha}_{\gamma})_{\alpha\in[d],\gamma\in[d_{\alpha}]}\subset\Zb^{h_{2}}$ be another integer vector
configuration with toric ideal~$I_{\Ccal}\subset\Kb[y]$.  Then $I=I_{\Bcal}$ and $J=I_{\Ccal}$ are
$\Nb\Acal$-homogeneous, if and only if there exist matrices $B,C$ such that
$a_{\alpha}=Bb^{\alpha}_{\beta}=Cc^{\alpha}_{\gamma}$ for all $\alpha,\beta,\gamma$.  As shown in~\cite{TFP-II}, the
toric fiber product of $I$ and $J$ is the toric ideal of the vector configuration
\begin{equation*}
  \Bcal\times_{\Acal}\Ccal
  = 
  \begin{pmatrix}
    \begin{pmatrix}
      b^{\alpha}_{\beta} \\ c^{\alpha}_{\gamma}
    \end{pmatrix}
    \in\Zb^{h_{1}+h_{2}}
  \end{pmatrix}_{\alpha\in[d],\beta\in[d_{\alpha}],\gamma\in[d'_{\alpha}]}.
\end{equation*}
For toric ideals the construction of Proposition~\ref{prop:SP-as-TFP} works as follows: Let $\Gcal$ be a Graver basis
of~$\Acal$.  For any $g\in\Gcal$ and any pair of Graver index sequences $\bbet,\bgam$ for $g$ let
\begin{equation*}
  a^g = \sum_i a^{\alpha_i} = \sum_j a^{\alpha'_j},
  \qquad
  b^g_\bbet = \sum_{i}b^{\alpha_i}_{\beta_i}
  \qquad\text{and}\qquad
  c^g_\bgam = \sum_{j}c^{\alpha'_j}_{\gamma_j}\,,
\end{equation*}
and consider the vector configurations $\Acal'=\Acal\cup(a^g)_{g\in\Gcal}$, $\Bcal'=
\Bcal\cup(b^g_\bbet)_{g\in\Gcal,\bbet}$ and $\Ccal'= \Ccal\cup(c^g_\bgam)_{g\in\Gcal,\bgam}$ (here, the symbol $\cup$
denotes the union of indexed multi-sets, i.e.~the same vector may appear multiple times in the same vector
configuration; cf.~Remark~\ref{rem:choice-Ap}).  Then $a^g = Bb^g_\bbet = Cc^g_\bgam$, and the toric ideals $I_{\Bcal'}$
and~$I_{\Ccal'}$ can be used as the ideals $I'$ and $J'$ in Proposition~\ref{prop:SP-as-TFP}.  Therefore, the Segre
product $\Kb[\Nb\Bcal] \SP \Kb[\Nb\Ccal]$
can be presented by the toric ideal of the vector configuration
$\Bcal'\TFP[\Acal'] \Ccal'$.
\begin{cor}
  \label{cor:torSPtoristor}
  The Segre product of affine monoid rings is an affine monoid ring.
\end{cor}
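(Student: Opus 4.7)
The plan is to read off the corollary directly from the construction that immediately precedes its statement, since essentially all the work has been done. First I would observe that by Proposition~\ref{prop:SP-as-TFP}, the Segre product $\Kb[\Nb\Bcal]\SP\Kb[\Nb\Ccal]$ is isomorphic to $\Kb[z']/(I'\TFP[\Acal']J')$ for appropriate choices of $\Acal'$, $I'$ and $J'$. The task therefore reduces to producing such $I'$, $J'$ that are themselves toric ideals of some explicit vector configurations, and then recognising the resulting toric fiber product as a toric ideal.

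Next, I would use the choices made in the paragraph preceding the corollary: take $\Acal' = \Acal\cup(a^g)_{g\in\Gcal}$ together with $\Bcal' = \Bcal\cup(b^g_\bbet)_{g\in\Gcal,\bbet}$ and $\Ccal' = \Ccal\cup(c^g_\bgam)_{g\in\Gcal,\bgam}$. The key point to verify is that $I_{\Bcal'}$ and $I_{\Ccal'}$ play the roles of $I'$ and $J'$ in Proposition~\ref{prop:SP-as-TFP}. This amounts to checking (i) that $\Kb[\Nb\Bcal']\cong\Kb[\Nb\Bcal]$ and similarly for $\Ccal'$, which holds because each added generator $b^g_\bbet$ is a positive integer combination of generators already in $\Bcal$, and (ii) that the added binomial relations $u^g_\bbet - \prod_i x^{\alpha_i}_{\beta_i}$ and $v^g_\bgam - \prod_j y^{\alpha'_j}_{\gamma_j}$ are exactly among the generators of the toric ideals $I_{\Bcal'}$ and $I_{\Ccal'}$ respectively.

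With those identifications in place, the construction of $\Bcal'\TFP[\Acal']\Ccal'$ recalled from~\cite{TFP-II} (quoted just above the corollary) says precisely that $I_{\Bcal'}\TFP[\Acal']I_{\Ccal'} = I_{\Bcal'\TFP[\Acal']\Ccal'}$ is itself a toric ideal. Therefore
\[
\Kb[\Nb\Bcal]\SP\Kb[\Nb\Ccal] \cong \Kb[z']/I_{\Bcal'\TFP[\Acal']\Ccal'} \cong \Kb[\Nb(\Bcal'\TFP[\Acal']\Ccal')],
\]
which exhibits the Segre product as the affine monoid ring of $\Nb(\Bcal'\TFP[\Acal']\Ccal')$.

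There is no real obstacle here, as the corollary is a bookkeeping consequence of Proposition~\ref{prop:SP-as-TFP} combined with the combinatorial description of the toric fiber product of toric ideals. The only mildly delicate step is confirming that the ring extensions needed in Proposition~\ref{prop:SP-as-TFP} coincide with the toric ideals of $\Bcal'$ and $\Ccal'$; this follows because the additional generators $b^g_\bbet$ and $c^g_\bgam$ are, by construction, sums over Graver index sequences and hence lie in $\Nb\Bcal$ and $\Nb\Ccal$, so all relations introduced are of binomial form and homogeneous with respect to the extended grading.
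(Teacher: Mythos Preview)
Your proposal is correct and follows exactly the same route as the paper's proof: invoke Proposition~\ref{prop:SP-as-TFP} with the choices $I'=I_{\Bcal'}$ and $J'=I_{\Ccal'}$ described just before the corollary, then use that the toric fiber product of toric ideals is the toric ideal of $\Bcal'\TFP[\Acal']\Ccal'$. The paper compresses this into a single sentence, whereas you spell out the intermediate identifications more explicitly, but the argument is the same.
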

\begin{proof}
  If $I_{\Bcal}$ and $I_{\Ccal}$ are presenting toric ideals, then the toric fiber product of $I_{\Bcal'}$ and
  $I_{\Ccal'}$ above presents the Segre product by Proposition~\ref{prop:SP-as-TFP}.
\end{proof}

\begin{rem}
  \label{rem:choice-Ap}
  When applying Proposition~\ref{prop:SP-as-TFP}, there are usually several natural choices for $\Acal'$, $I'$ and~$J'$.
  For example, $\Acal'$ may contain duplicate vectors.  While it is possible to remove those, this will typically lead
  to a larger fiber product $\Bcal'\TFP[\Acal']\Ccal'$.  The reason is that $\Bcal'\TFP[\Acal']\Ccal'$ contains one
  vector for each pair of vectors from $\Bcal'$ and $\Ccal'$ having the same degree in~$\Acal'$.  If vectors in $\Acal'$
  are identified, then the number of pairs grows.  This effect is illustrated in Section~\ref{exs:hier-mod}.
\end{rem}
By construction the vector configuration $\Bcal\TFP\Ccal$ is a sub-configuration of $\Bcal'\TFP[\Acal']\Ccal'$.  The
elements of $\Bcal\TFP\Ccal$ are \emph{simple columns}.  For any pair of Graver index sequences~$\bbet,\bgam$ the vector
\begin{equation*}
  m^{g}_{\bbet,\bgam}
  = \begin{pmatrix}
	b^{g}_{\bbet} \\ c^{g}_{\bgam}
  \end{pmatrix}
\end{equation*}
is a \emph{Graver column}.  In other words, the Graver columns are the elements of
$\Bcal'\TFP[\Acal']\Ccal'\setminus\Bcal\TFP\Ccal$.  Some Graver columns correspond to redundant Graver generators.  The
following two lemmas give easy criteria for this.
\begin{lemma}
  Let $\Gcal$ be the Graver basis of $\Acal$, and let $\bbet,\bgam$ be a pair of Graver index sequences for
  some~$g\in\Gcal$.
  \begin{itemize}
  \item
	\label{lem:red-gen}
	$\ol M^{g}_{\bbet,\bgam}$ is a product of simple generators if and only if 
	$m^{g}_{\bbet,\bgam}$ is a non-negative integral combination of the simple columns.
  \item 
	\label{lem:Veronese-redundancy}
	If there exists a Graver index sequence $\bbet'$ for $-g$ such that $b^{g}_{\bbet}=b^{-g}_{\bbet'}$, then the Graver
	generator $\ol M^{g}_{\bbet,\bgam}$ is a product of simple generators.
  \end{itemize}
\end{lemma}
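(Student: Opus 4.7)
The plan is to reduce both bullets to bookkeeping about how monomials in $\Kb[\Nb\Bcal]\otimes\Kb[\Nb\Ccal]$ correspond to elements of the semigroup $\Nb\Bcal\oplus\Nb\Ccal$. Under this identification, multiplication of monomials corresponds to addition of semigroup elements, the simple generator $\ol M^{\alpha,\alpha}_{\beta,\gamma}$ corresponds to the simple column $\binom{b^{\alpha}_{\beta}}{c^{\alpha}_{\gamma}}$, and the Graver generator $\ol M^{g}_{\bbet,\bgam}$ corresponds precisely to the Graver column $m^{g}_{\bbet,\bgam}$ (since $\prod_i \ol x^{\alpha_i}_{\beta_i}$ has exponent $b^g_\bbet$ in $\Nb\Bcal$ and $\prod_j \ol y^{\alpha'_j}_{\gamma_j}$ has exponent $c^g_\bgam$ in $\Nb\Ccal$). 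For the first bullet, an equation $\ol M^{g}_{\bbet,\bgam}=\prod_{k}\ol M^{\alpha_{k},\alpha_{k}}_{\beta_{k},\gamma_{k}}$ translates directly into the vector equation $m^{g}_{\bbet,\bgam}=\sum_{k}\binom{b^{\alpha_{k}}_{\beta_{k}}}{c^{\alpha_{k}}_{\gamma_{k}}}$, and conversely; this yields the stated equivalence immediately.

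For the second bullet, the approach is to exhibit an explicit decomposition of $m^{g}_{\bbet,\bgam}$ into simple columns and then invoke the first bullet. Writing $-g=\sum_{j=1}^{m}e_{\alpha'_{j}}-\sum_{i=1}^{n}e_{\alpha_{i}}$ in sign-consistent form, a Graver index sequence $\bbet'=(\beta'_{1},\dots,\beta'_{m})$ for $-g$ satisfies $\beta'_{j}\in[d_{\alpha'_{j}}]$, which is the same range in which $\gamma_{j}$ lies. Pairing $\bbet'$ with $\bgam$ produces the $m$ legitimate simple columns $\binom{b^{\alpha'_{j}}_{\beta'_{j}}}{c^{\alpha'_{j}}_{\gamma_{j}}}$, whose sum is
\begin{equation*}
\binom{\sum_{j}b^{\alpha'_{j}}_{\beta'_{j}}}{\sum_{j}c^{\alpha'_{j}}_{\gamma_{j}}}
=\binom{b^{-g}_{\bbet'}}{c^{g}_{\bgam}}
=\binom{b^{g}_{\bbet}}{c^{g}_{\bgam}}
=m^{g}_{\bbet,\bgam},
\end{equation*}
using the hypothesis $b^{g}_{\bbet}=b^{-g}_{\bbet'}$ at the penultimate step. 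Thus $m^{g}_{\bbet,\bgam}$ is a non-negative integral combination of simple columns, and the first bullet delivers the conclusion.

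I do not foresee a serious technical obstacle. The only point requiring care is tracking the index sets $\bbet$, $\bbet'$, and $\bgam$: one must check that $\bbet'$ and $\bgam$ are naturally indexed by the same set $\{1,\dots,m\}$ enumerating the negative support of $g$ (equivalently, the positive support of $-g$), so that the pairing $(\beta'_j,\gamma_j)$ legitimately produces a simple column for each $j$. Once this pairing is set up correctly, the two claims reduce to essentially trivial semigroup arithmetic.
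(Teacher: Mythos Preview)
Your proposal is correct and follows essentially the same approach as the paper: the first bullet is the standard dictionary between monomial equalities in an affine monoid ring and additive identities in the underlying monoid, and for the second bullet both you and the paper exhibit the identical decomposition $m^{g}_{\bbet,\bgam}=\sum_{j=1}^{m}\binom{b^{\alpha'_{j}}_{\beta'_{j}}}{c^{\alpha'_{j}}_{\gamma_{j}}}$ and then invoke the first bullet. Your extra paragraph verifying that $\bbet'$ and $\bgam$ are indexed by the same set $\{1,\dots,m\}$ is a useful clarification but not a different idea.
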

\begin{proof}
  The first statement translates the fact that an algebra generator of the affine monoid ring $R_{\Bcal}\SP R_{\Ccal}$ is
  redundant if and only if its exponent vector is redundant as a monoid generator.
  The assumptions of the second statement imply $c^{g}_{\bgam}=\sum_{j=1}^{m}c_{\gamma_{j}}^{\alpha_{j}'}$ and
  $b^{g}_{\bbet}=b^{-g}_{\bbet'}=\sum_{j=1}^{m}b_{\beta'_{j}}^{\alpha_{j}'}$.  Therefore,
  \begin{equation*}
	m^{g}_{\bbet,\bgam}
	= \begin{pmatrix}
	  b^{g}_{\bbet} \\ c^{g}_{\bgam}
	\end{pmatrix}
	= \sum_{j=1}^{m}
	\begin{pmatrix}
	  b^{\alpha_{j}'}_{\beta'_{j}} \\ c^{\alpha_{j}'}_{\gamma_{j}}
	\end{pmatrix},
  \end{equation*}
  and so the second statement follows from the first.
\end{proof}

By Serre's criterion and~\cite[Theorem~6]{Tousi2003672}, if $\Kb$ is perfect, then the tensor product of integrally
closed $\Kb$-algebras is integrally closed.  By \cite[Lemma~2.6]{TFP-II}, the Segre product inherits this property, since it is a direct summand in the tensor product.
In contrast, the toric fiber product does not preserve normality in general.  Assume that $R\otimes S$ is an integral
domain.  Then the integral closure of $\Kb[z]/(I\TFP J)$ is a subring of~$R\SP S$.  We may ask when $R\SP S$ equals the
integral closure of~$\Kb[z]/(I\TFP J)$.  Two things need to be checked:
\begin{enumerate}
\item Every Graver generator lies in the field of fractions of~$\Kb[z]/(I\TFP J)$.
\item Every Graver generator is integral over~$\Kb[z]/(I\TFP J)$.
\end{enumerate}
From a geometric point of view, the first statement says that $\Spec(R\SP S)$ is a dense subvariety of
\mbox{$\Spec(\Kb[z]/(I\TFP J))$}, while the second statement says that the natural inclusion $\Kb[z]/(I\TFP J)\to R\SP
S$ induces a finite map of affine schemes.  If one of these two statements holds, then the dimensions of
$\Spec(\Kb[z]/(I\TFP J))$ and $\Spec(R\SP S)$ agree.  In general, this need not be the case, see
Example~\ref{ex:dep-on-pres}.
The next two lemmas give criteria for these two statements to hold in the case of toric fiber products of toric ideals.
The proofs follow from standard arguments from the correspondence between affine monoid rings and affine monoids.
\begin{lemma}
  \label{lem:Graver-integral}
  Let $I_{\Bcal}$ and $I_{\Ccal}$ be toric ideals.  The Graver generator $\ol M^{g}_{\bbet,\bgam}$ is integral over
  $\Kb[z]/(I\TFP J)$ if and only if $m^{g}_{\bbet,\bgam}$ is a non-negative rational linear combination of the simple
  columns.  
\end{lemma}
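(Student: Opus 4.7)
The statement translates an algebraic integrality question into a purely combinatorial condition on affine monoids. My plan is to reduce it to the standard correspondence between integrality of monomials in affine monoid rings and rational saturation of the underlying monoid, which is available precisely because both rings in sight are affine monoid rings.

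First, I identify both rings explicitly. Because $I = I_\Bcal$ and $J = I_\Ccal$ are toric, the discussion leading to Corollary~\ref{cor:torSPtoristor} identifies $\Kb[z]/(I\TFP J) \cong \Kb[\Nb(\Bcal\TFP\Ccal)]$, while $R_\Bcal \SP R_\Ccal \cong \Kb[\Nb(\Bcal'\TFP[\Acal']\Ccal')]$. Under these identifications, the Graver generator $\ol M^g_{\bbet,\bgam}$ is the monomial with exponent vector $m^g_{\bbet,\bgam}$, and the simple columns generate the submonoid of the larger configuration that corresponds to the inclusion $\Kb[z]/(I\TFP J) \hookrightarrow R_\Bcal \SP R_\Ccal$.

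The combinatorial fact to invoke is the following: for affine monoids $M \subseteq L$ in a common lattice and any $a \in L$, the monomial $x^a \in \Kb[L]$ is integral over $\Kb[M]$ if and only if $n \cdot a \in M$ for some positive integer $n$. The ``if'' direction is immediate, since then $x^a$ is a root of the monic polynomial $T^n - x^{na}$ with $x^{na} \in \Kb[M]$. For the ``only if'' direction, suppose $(x^a)^N = -\sum_{k<N} f_k (x^a)^k$ with $f_k \in \Kb[M]$; expanding $f_k = \sum_{m \in M} c_{k,m} x^m$ and using that distinct monomials are $\Kb$-linearly independent in $\Kb[L]$, some summand on the right must contribute a nonzero multiple of $x^{Na}$, which forces $k a + m = N a$ with $m \in M$ and $k < N$, and hence $(N-k)\cdot a \in M$.

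Applying this fact with $L = \Nb(\Bcal' \TFP[\Acal'] \Ccal')$, $M = \Nb(\Bcal\TFP\Ccal)$, and $a = m^g_{\bbet,\bgam}$ turns integrality of $\ol M^g_{\bbet,\bgam}$ over $\Kb[z]/(I\TFP J)$ into the condition that $n \cdot m^g_{\bbet,\bgam} \in \Nb(\Bcal\TFP\Ccal)$ for some $n \in \Nb_+$; clearing or introducing denominators identifies this with $m^g_{\bbet,\bgam}$ being a non-negative rational combination of the simple columns. The main technical point is the ``only if'' half of the combinatorial fact; this is a standard consequence of the monoid-ring dictionary (cf.~\cite[Theorem~4.42]{bruns09:poly_k_theory}), but I would expect to spell it out carefully so that the proof is self-contained.
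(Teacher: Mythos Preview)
Your proof is correct and follows essentially the same approach as the paper's. The paper's argument is the same reduction to the monomial relation $\ol M^{n}=\ol M'$ with $\ol M'$ a product of simple generators, except that it outsources the ``only if'' half of your combinatorial fact to a reference (\cite[\S1.4 and Corollary~2.3.7]{SwaHuIntClosure}) rather than spelling out the linear-independence argument; your write-up is thus slightly more self-contained but otherwise identical in substance.
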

\begin{proof}
  An integral relation of the Graver generator $\ol M$ can be taken to be of the form~$\ol M^{n}=\ol M'$, where $\ol M'$
  is a product of simple generators (this holds more generally for any monomial, see \cite[\S1.4 and
  Corollary~2.3.7]{SwaHuIntClosure}).  Such a relation is equivalent to the statement that $n m^{g}_{\bbet,\bgam}$ is a
  non-negative integral linear combination of the simple columns.
\end{proof}
\begin{lemma}
  \label{lem:Graver-quotient}
  Let $I_{\Bcal}$ and $I_{\Ccal}$ be toric ideals.  The Graver generator $\ol M^{g}_{\bbet,\bgam}$ lies in the field of
  fractions of $\Kb[z]/(I_\Bcal\TFP I_\Ccal)$ if and only if the Graver column $m^{g}_{\bbet,\bgam}$ can be written
  as an integral combination (not necessarily non-negative) of the simple columns.  
\end{lemma}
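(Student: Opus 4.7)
The plan is to view the toric fiber-product ring as an affine monoid ring, identify the Graver generator with a single Laurent monomial, and reduce the question to the standard correspondence between the field of fractions of an affine monoid ring and its associated group $\gp$.

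Since $I = I_\Bcal$ and $J = I_\Ccal$ are toric, $R = \Kb[\Nb\Bcal]$ and $S = \Kb[\Nb\Ccal]$ are integral domains, $R \otimes S \cong \Kb[\Nb\Bcal \oplus \Nb\Ccal]$ is a domain, and $\Kb[z]/(I_\Bcal \TFP I_\Ccal)$ embeds as the affine monoid ring $\Kb[\Nb(\Bcal \TFP \Ccal)]$ whose monomial basis is indexed by the simple columns. Under the identifications $\oxab = \chi^{b^\alpha_\beta}$ and $\oyag = \chi^{c^\alpha_\gamma}$, direct computation gives
\[
  \ol M^g_{\bbet,\bgam} = \Big(\prod_{i=1}^n \chi^{b^{\alpha_i}_{\beta_i}}\Big) \otimes \Big(\prod_{j=1}^m \chi^{c^{\alpha'_j}_{\gamma_j}}\Big) = \chi^{m^g_{\bbet,\bgam}},
\]
so the Graver generator is literally a single monomial with exponent vector $m^g_{\bbet,\bgam}$ in the ambient lattice $\Zb\Bcal \oplus \Zb\Ccal$.

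What remains is the standard principle that for an affine submonoid $M \subseteq \Zb^N$, a Laurent monomial $\chi^v$ belongs to the field of fractions of $\Kb[M]$ (inside $\Kb[\Zb^N]$) if and only if $v \in \gp(M) = \Zb M$. The ``if'' direction is immediate from any decomposition $v = m' - m$ with $m, m' \in M$. For ``only if'', writing $\chi^v = f/g$ with nonzero $f, g \in \Kb[M]$, expanding $g = \sum_i a_i \chi^{m_i}$ in pairwise distinct monomials of $\Kb[M]$, and using that the domain $\Kb[\Zb^N]$ admits no cancellation among the distinct exponents $m_i + v$, one concludes that every $m_i + v$ equals some exponent of $f$, which lies in $M$; hence $v \in \gp(M)$. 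Applied to $M = \Nb(\Bcal \TFP \Ccal)$, so that $\gp(M) = \Zb(\Bcal \TFP \Ccal)$, this translates verbatim into the requested characterization of $m^g_{\bbet,\bgam}$ as an integer (not necessarily non-negative) linear combination of simple columns.

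The only genuinely careful step is the ``only if'' direction of the monomial-in-quotient-field principle, which requires the no-cancellation observation inside the ambient domain $\Kb[\Nb\Bcal \oplus \Nb\Ccal]$. This is the same bookkeeping used in Lemma~\ref{lem:Graver-integral} (with $\Zb$-combinations replacing $\Nb$-combinations) and in standard references on affine monoid rings such as~\cite{bruns09:poly_k_theory}. Once recorded, the proof is a short unwinding of definitions, directly parallel to the integrality criterion in Lemma~\ref{lem:Graver-integral}.
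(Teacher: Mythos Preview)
Your proof is correct and follows essentially the same approach as the paper: both reduce to the standard correspondence between the field of fractions of an affine monoid ring $\Kb[M]$ and the group~$\gp(M)$, then translate membership in $\gp(\Nb(\Bcal\TFP\Ccal))$ into integer combinations of simple columns. The paper's version is terser---it simply writes $\ol M^{g}_{\bbet,\bgam}\cdot\ol M'=\ol M$ with $\ol M,\ol M'$ products of simple generators and passes to exponent vectors---whereas you make the no-cancellation step in the ``only if'' direction explicit; but the underlying argument is the same.
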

\begin{proof}
  $\ol M^{g}_{\bbet,\bgam}$ lies in the field of fractions of $R\SP S$ if and only if there exist $\ol M,\ol M'\in R\SP
  S$ with $\ol M^{g}_{\bbet,\bgam}\cdot \ol M'=\ol M$.  As in the proof of Lemma~\ref{lem:Graver-integral}, $\ol M$ and
  $\ol M'$ can be taken to be products of simple generators themselves.  The statement follows by passing to the
  exponent vectors.
\end{proof}

\section{Examples}
\label{sec:examples}

\subsection{Products of Veronese ideals over Veronese monoids}
\label{exs:Veronese-products}

Fix $k>0$ and~$n>0$.  For any multi-index $\biot=(\iota_{1},\dots,\iota_{k})$ of size $|\biot|=k$ with $\iota_{i}\in[n]$
let $v_{\biot}=\sum_{i=1}^{k}e_{\iota_{i}}\in\Zb^{n}$ be the sum of the unit
vectors~$e_{\iota_{1}},\dots,e_{\iota_{k}}$.  Let
\begin{equation*}
\Vcal_{k,n}=\{v_{\biot} : |\biot|=k, \iota_{i}\in[n]\}
\end{equation*}
be the configuration of all non-negative integer vectors $v\in\Nb_0^n$ with $\sum_{i=1}^nv_i=k$.  The toric variety of
the toric ideal $I_{k,n}:=I_{\Vcal_{k,n}}$ is the \emph{Veronese variety} of degree $k$ in $n$ variables.

Fix $k>0$, and let $n_1>1$, and consider a partition $[n_1] = N_1\dot\cup\dots\dot\cup N_{n_0}$ into $n_0>1$ (disjoint)
blocks.  Such a partition defines a natural map $p_1:[n_1]\to[n_0]$ such that $x\in N_{p_1(x)}$ for all~$x\in[n_1]$.
This map extends to a linear map~$\Zb^{n_1}\to\Zb^{n_0}$ by sending the $i$th unit vector to the $p_1(i)$th unit vector,
and this map is denoted by the same symbol.  Note that $p_1$ sends each vector in $\Vcal_{k,n_1}$ to a vector
in~$\Vcal_{k,n_0}$.  Therefore, $p_1$ induces an $\Nb\Vcal_{k,n_0}$-grading under which the toric ideal $I_{k,n_1}$ is
homogeneous.

\begin{prop}
  \label{prop:product-with-Veronese}
  Consider an $\Nb\Vcal_{k,n_{0}}$-graded polynomial ring $\Kb[y]$ where the degrees of the variables belong
  to~$\Vcal_{k,n_{0}}$, and let $J$ be a homogeneous ideal.  Then the toric fiber product
  $I_{k,n_1}\TFP[\Vcal_{k,n_0}]J$ is a defining ideal of the corresponding Segre product.
\end{prop}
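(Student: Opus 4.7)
The plan is to show that the canonical injection
\begin{equation*}
\Kb[z]/(I_{k,n_{1}}\TFP[\Vcal_{k,n_{0}}]J)\hookrightarrow R\SP[\Nb\Vcal_{k,n_{0}}]S,
\end{equation*}
where $R=\Kb[\Nb\Vcal_{k,n_{1}}]$ and $S=\Kb[y]/J$, is also surjective. By Proposition~\ref{prop:Segre-generators}, $R\SP[\Nb\Vcal_{k,n_{0}}]S$ is generated by the simple and the Graver generators, and the image of the injection is precisely the subring generated by the simple generators. Hence it is enough to show that every Graver generator $\ol M^{g}_{\bbet,\bgam}$ is expressible as a product of simple generators.

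Fix such a Graver generator, with $g=\sum_{i=1}^{n}e_{\alpha_{i}}-\sum_{j=1}^{m}e_{\alpha'_{j}}\in\ker_{\Zb}\Vcal_{k,n_{0}}$. Since every vector in $\Vcal_{k,n_{0}}$ has coordinate sum $k$, the kernel condition forces $n=m$. Identify the product $\prod_{i=1}^{n}\ol x^{\alpha_{i}}_{\beta_{i}}\in R$ with a lattice point $w\in\Nb\Vcal_{k,n_{1}}\subseteq\Nb^{n_{1}}$, where each $x^{\alpha_{i}}_{\beta_{i}}$ corresponds to an element of $\Vcal_{k,n_{1}}$ lying above $\alpha_{i}$; then $p_{1}(w)=\sum_{i}\alpha_{i}=\sum_{\ell}\alpha'_{\ell}$. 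The crux is the following combinatorial claim: $w$ admits a decomposition $w=\sum_{\ell=1}^{n}w_{\ell}$ with $w_{\ell}\in\Vcal_{k,n_{1}}$ and $p_{1}(w_{\ell})=\alpha'_{\ell}$ for every~$\ell$. Granted this, choose $\tilde\beta_{\ell}$ so that $x^{\alpha'_{\ell}}_{\tilde\beta_{\ell}}$ corresponds to $w_{\ell}$; then $\prod_{i}\ol x^{\alpha_{i}}_{\beta_{i}}=\prod_{\ell}\ol x^{\alpha'_{\ell}}_{\tilde\beta_{\ell}}$ inside~$R$, and therefore
\begin{equation*}
\ol M^{g}_{\bbet,\bgam}=\Big(\prod_{\ell}\ol x^{\alpha'_{\ell}}_{\tilde\beta_{\ell}}\Big)\otimes\Big(\prod_{\ell}\ol y^{\alpha'_{\ell}}_{\gamma_{\ell}}\Big)=\prod_{\ell}\big(\ol x^{\alpha'_{\ell}}_{\tilde\beta_{\ell}}\otimes\ol y^{\alpha'_{\ell}}_{\gamma_{\ell}}\big),
\end{equation*}
which is a product of simple generators.

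To establish the combinatorial claim, restrict $w$ to each block of the partition $[n_{1}]=N_{1}\dot\cup\cdots\dot\cup N_{n_{0}}$: the total coordinate sum of $w|_{N_{i}}$ equals $p_{1}(w)_{i}=\sum_{\ell}(\alpha'_{\ell})_{i}$. Split the underlying multiset arbitrarily into $n$ sub-multisets of the prescribed sizes $(\alpha'_{1})_{i},\dots,(\alpha'_{n})_{i}$, and let $w_{\ell}$ be the aggregate, over $i\in[n_{0}]$, of the $\ell$th sub-multiset. Then $w_{\ell}$ has coordinate sum $\sum_{i}(\alpha'_{\ell})_{i}=k$ and projects under $p_{1}$ to $\alpha'_{\ell}$, while $\sum_{\ell}w_{\ell}=w$ by construction. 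The only real obstacle is carrying out this combinatorial bookkeeping cleanly; there is no algebraic subtlety, and in particular $J$ plays no role in the rewriting because everything happens on the Veronese side of the tensor product.
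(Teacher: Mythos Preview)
Your argument is correct and follows the same strategy as the paper: show that every Graver generator is a product of simple generators by exploiting the fact that all rewriting can be done on the Veronese side. The paper phrases this via Lemma~\ref{lem:Veronese-redundancy} (finding a Graver index sequence $\bbet'$ for $-g$ with $b^{g}_{\bbet}=b^{-g}_{\bbet'}$) and then builds $\bbet'$ through an explicit iterative map~$\kappa$; your block-by-block multiset splitting accomplishes the same decomposition more directly and is arguably cleaner. Your closing observation that $J$ plays no role is exactly the point behind the paper's use of that lemma, though the paper does not say so as explicitly.
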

\begin{proof}
  We want to apply Lemma~\ref{lem:Veronese-redundancy} as follows: Any Graver basis element~$g$ (in
  fact, any kernel element) corresponds to a non-negative vector $v_{g}\in\Zb^{n_{0}}$ and two different ways of
  writing $v_{g}$ as a sum of $k$-tuples of unit-vectors.  The vector $b^{g}_{\bbet}$ arises from $v_{g}$ by replacing
  each unit vector $e_{i}$, with $i\in[n_{0}]$, by some $e_{j}$ with $j\in[n_{1}]$ satisfying $p_{1}(e_{j})=e_{1}$.  The
  crucial point is that the $k$-tuples can be re-arranged such that $b^{g}_{\bbet}$ can be written in the
  form~$b^{-g}_{\bbet'}$.
  The technical details are as follows:

  Let $\Gcal$ be the Graver basis of $\Vcal_{k,n_{0}}$, and let~$g\in\Gcal$.  By definition, $g\in\ker\Vcal_{k,n_{0}}$,
  and so $v_{g}:=\sum_{\biot:g_{\biot}>0}g_{\biot}v_{\biot} = - \sum_{\biot:g_{\biot}<0}g_{\biot}v_{\biot}$.  Let
  $\bbet,\bgam$ be a pair of Graver index sequences for~$g$.  The vector $b^{g}_{\bbet}$ satisfies
  $p_{1}(b^{g}_{\bbet})=v_{g}$.
  Denote by $m$ the sum of the positive entries of~$g$.  Since all vectors $v_{\iota}\in\Vcal_{k,n_{0}}$ satisfy
  $\sum_{i=1}^{n_{0}}(v_{\iota})_{i}=k$, the sum of the negative entries of~$g$ equals~$-m$.

  Write $v_{g}=\sum_{j=1}^{m}v_{\alpha_{j}'}$, where each $\alpha_{j}'=(\iota_{j,1},\dots,\iota_{j,k})$ is a multi-index
  of size~$k$ and where $\bal'=(\alpha_{1}',\dots,\alpha_{m}')$ is the multi-index associated with~$g$, as in
  Section~\ref{sec:generators-SP}.  Note that $\bbet=(\beta_{1},\dots,\beta_{m})$ also consists of multi-indices
  $\beta_{j}=(\lambda_{j,1},\dots,\lambda_{j,k})$ in such a way that $\lambda_{j,i}\in[n_{1}]$
  and~$p_{1}(\lambda_{j,i})=\iota_{j,i}$.

  Define a map $\kappa:[m]\times[k]\to[n_{1}]$ iteratively as follows:
  \begin{align*}
	\kappa(j,i) &= \min\big\{
	\{ \lambda_{j',i'} : p_{1}(\lambda_{j',i'})=\iota_{j,i}\} \setminus \{ \kappa_{j',i'} : kj'+i'< kj+i\}
	\big\} \\
	&= \min\big\{
	\{ \lambda_{j',i'} : p_{1}(\lambda_{j',i'})=\iota_{j,i}\} \setminus \{ \kappa_{j',i'} : kj'+i'< kj+i, p_{1}(\kappa_{j',i'})=\iota_{j,i}\}
	\big\},
  \end{align*}
  where the symbol $\setminus$ denotes the difference of multi-sets.  The second line shows that $\kappa$ is well-defined:
  By the recursive definition, the cardinality of $\{ \kappa_{j',i'} : kj'+i'< kj+i, p_{1}(\kappa_{j',i'})=\iota_{j,1}\}$
  is strictly less than the cardinality of $\{ \lambda_{j',i'} : p_{1}(\lambda_{j',i'})=\iota_{j,i}\}$, and hence the
  difference of multi-sets inside the minimum can never be empty.

  Let $\beta'_{j}=(\kappa(j,1),\dots,\kappa(j,k))$ and~$\bbet'=(\beta'_{1},\dots,\beta'_{m})$.  The claim is that
  $\bbet'$ is a \hyperlink{GIS}{Graver index sequence} for $-g$, and that $b^{g}_{\bbet}=b^{-g}_{\bbet'}$.  The first
  claim follows since $p_{1}(\kappa_{j,i})=\iota_{j,i}$ and~$p_{1}(v_{\beta'_{j}})=v_{\alpha'_{j}}$.  To prove the second
  claim, observe that the two multi-sets $\{\kappa(j,i):j=1,\dots,m,i=1,\dots,k\}$ and
  $\{\lambda_{j,i}:j=1,\dots,m,i=1,\dots,k\}$ are identical, and hence
  \begin{equation*}
	b^{g}_{\bbet}=\sum_{j=1}^{m}v_{\beta_{j}}
	=\sum_{j=1}^{m}\sum_{i=1}^{k}e_{\lambda_{j,i}}
	=\sum_{j=1}^{m}\sum_{i=1}^{k}e_{\kappa(j,i)}
	=\sum_{j=1}^{m}v_{\beta'_{j}}=b^{-g}_{\bbet'}.
  \end{equation*}
  The statement of the proposition now follows from Lemma~\ref{lem:Veronese-redundancy}.
\end{proof}
\begin{cor}
  Iterated toric fiber products of Veronese ideals over Veronese monoids (using partitions to define the common
  gradings) are normal.
\end{cor}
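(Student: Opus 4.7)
The plan is to proceed by induction on the number of Veronese factors appearing in the iterated toric fiber product, using Proposition~\ref{prop:product-with-Veronese} to replace each toric fiber product by the corresponding Segre product, and then invoking preservation of normality under Segre products.

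For the base case I would observe that each Veronese ring $\Kb[\Nb\Vcal_{k,n}]$ is a normal affine monoid ring: the affine monoid $\Nb\Vcal_{k,n}$ consists of all lattice points with coordinate sum a multiple of $k$ in its rational cone, so it is normal in the sense of Definition~\ref{d:normalMonoid}, and normality of an affine monoid $L$ is equivalent to normality of $\Kb[L]$ (see e.g.~\cite[Theorem~4.42]{bruns09:poly_k_theory}).

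For the inductive step I would use associativity of the toric fiber product to write the iterated product in the form $I_{k,n_{1}}\TFP[\Vcal_{k,n_{0}}] J$, where $J$ is itself an iterated toric fiber product of Veronese ideals over a Veronese monoid and is $\Nb\Vcal_{k,n_{0}}$-homogeneous via the chosen partition. Proposition~\ref{prop:product-with-Veronese} then identifies the quotient ring $\Kb[z]/(I_{k,n_{1}}\TFP[\Vcal_{k,n_{0}}] J)$ with the Segre product $\Kb[\Nb\Vcal_{k,n_{1}}] \SP (\Kb[y]/J)$. The second factor is normal by the inductive hypothesis and the first by the base case, so it remains to argue that the Segre product of two normal $\Kb$-algebras of this form is normal.

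For the final step I would take two routes depending on hypotheses on $\Kb$. Over a perfect field, the observation cited before Lemma~\ref{lem:Graver-integral} applies: Serre's criterion together with~\cite[Theorem~6]{Tousi2003672} implies that tensor products of integrally closed $\Kb$-algebras are integrally closed, and~\cite[Lemma~2.6]{TFP-II} transfers this to the Segre product as a direct summand. To remove the perfectness assumption, I would combinatorialize via Corollary~\ref{cor:torSPtoristor}: the Segre product is again an affine monoid ring, whose underlying monoid is the submonoid $\{(\ell_{1},\ell_{2}) : \deg(\ell_{1})=\deg(\ell_{2})\}$ of $\gp(L_{1})\oplus\gp(L_{2})$, and a direct check from Definition~\ref{d:normalMonoid} shows that this submonoid inherits normality from $L_{1}$ and $L_{2}$. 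The main obstacle is the bookkeeping of gradings: one must verify that at each inductive step the remaining ideal $J$ carries the Veronese grading required by Proposition~\ref{prop:product-with-Veronese}, which follows from the fact that the defining partitions compose compatibly with the coarsening onto $\Vcal_{k,n_{0}}$.
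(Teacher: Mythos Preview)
Your proposal is correct and follows essentially the same approach as the paper: induction on the number of factors, with Proposition~\ref{prop:product-with-Veronese} converting each toric fiber product into a Segre product, followed by the fact that Segre products preserve normality. The paper's proof is a terse two-sentence version of what you wrote; your additional care about the base case, the grading bookkeeping, and the combinatorial route via Corollary~\ref{cor:torSPtoristor} to avoid the perfectness hypothesis on~$\Kb$ are all reasonable elaborations that the paper leaves implicit.
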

\begin{proof}
  The statement follows by induction on the number of factors.  By Proposition~\ref{prop:product-with-Veronese}, the
  toric fiber product with a Veronese ideals is the defining ideal of a Segre product, and the Segre product preserves
  normality.
\end{proof}

\subsection{The toric fiber product does not preserve normality}
\label{exs:norm-tfp-norm}

The question of normality of toric fiber products arises in algebraic statistics.  Knowing that the semigroup of
possible margins of a hierarchical model for contingency table is normal can significantly simplify the statistical
analysis~\cite{yuguo06:_sequen,takemura08:_holes}.  In~\cite{sullivant09:_normal_graph} Sullivant shows that a binary
graph model---a certain hierarchical model defined for an undirected graph $G$---is normal if and only if $G$ is free of
$K_{4}$ minors.  Since gluing graphs corresponds to taking toric fiber products~\cite[Section~5]{TFP-II}, we can produce
non-normal toric fiber products of normal ideals.  These ideals are usually in many variables.  Here we give a much
smaller example.

Let
\begin{equation*}
  \Acal =
  \begin{pmatrix}
    2 & 0 & 1 \\
    0 & 2 & 1
  \end{pmatrix}.
\end{equation*}
Then $a_{1}+a_{2}=2 a_{3}$.  Therefore, $\codim(\Acal)=1$, and $\ker_{\Zb}\Acal$ is generated by~$h=(1,1,-2)$.

Consider the rings $\Kb[x]=\Kb[x^{1}_{1},x^{1}_{2},x^{2},x^{3}]$
and~$\Kb[y]=\Kb[y^{1}_{1},y^{1}_{2},y^{2}_{1},y^{2}_{2},y^{3}]$ and the ideals
$I=\langle{}x^{1}_{2}x^{2}-(x^{3})^{2}\rangle$ and~$J=\langle{}(y^{3})^{4}-y^{1}_{1}y^{1}_{2}y^{2}_{1}y^{2}_{2}\rangle$.
In fact, $I=I_{\Bcal}$ and $J=I_{\Ccal}$ are the toric ideals of the two matrices
\begin{align*}
  \Bcal &=
  \begin{pmatrix}
    0 & 2 & 0 & 1 \\
    0 & 0 & 2 & 1 \\
    1 & 0 & 0 & 0
  \end{pmatrix}, &
  \Ccal &=
  \begin{pmatrix}
    4 & 0 & 0 & 0 & 1 \\
    0 & 4 & 0 & 0 & 1 \\
    0 & 0 & 4 & 0 & 1 \\
    0 & 0 & 0 & 4 & 1
  \end{pmatrix}.
\end{align*}
As can be seen from the defining equations, both $I$ and $J$ are $\Nb\Acal$-homogeneous with respect to the grading
determined by the superscripted indices.  This also follows from the equalities
\begin{equation*}
  \Acal =
  \begin{pmatrix}
    1 & 0 & 2 \\
    0 & 1 & 0
  \end{pmatrix}
  \Bcal = \frac12
  \begin{pmatrix}
    1 & 1 & 0 & 0 \\
    0 & 0 & 1 & 1
  \end{pmatrix}
  \Ccal
\end{equation*}
Moreover, the two rings $R = \Kb[x]/I$ and $S = \Kb[y]/J$ are normal.

There are six Graver generators of $R\SP S$:
\begin{align*}
  \ol M_{\beta} &= \ol x^{1}_{\beta}\ol x^{2}\otimes(\ol y^{3})^{2}, &
  \ol M_{\gamma,\gamma'} &= (\ol x^{3})^{2}\otimes\ol y^{1}_{\gamma}\ol y^{2}_{\gamma'}.
\end{align*}
The generator $\ol M_{1}$ lies in the Segre product~$R\SP S$, but not in $\Kb[z]/(I\TFP J)$.  This can be seen, for
example, from Lemma~\ref{lem:red-gen}.
All other Graver generators are, in fact, redundant.  The square
\begin{equation*}
  (\ol M_{1})^{2}= (\ol x^{1}_{1})^{2}(\ol x^{2})^{2}\otimes(\ol y^{3})^{4} = (\ol x^{1}_{1})^{2}(\ol x^{2})^{2}\otimes\ol y^{1}_{1}\ol y^{1}_{2}\ol y^{2}_{1}\ol y^{2}_{2}
  = (\ol x^{1}_{1}\otimes\ol y^{1}_{1})(\ol x^{1}_{1}\otimes\ol y^{1}_{2})(\ol x^{2}\otimes\ol y^{2}_{1})(\ol x^{2}\otimes\ol y^{2}_{2})
\end{equation*}
lies in~$\Kb[z]/(I\TFP J)$.  Therefore, $\ol M_{1}$ is integral over~$\Kb[z]/(I\TFP J)$.  Moreover,
\begin{equation*}
  \ol M_{1}\cdot (\ol x^{1}_{2}\otimes\ol y^{1}_{1})
  = \ol x^{1}_{1}\ol x^{1}_{2} \ol x^{2}\otimes\ol y^{1}_{1}(\ol y^{3})^{2}
  = \ol x^{1}_{1}(\ol x^{3})^{2}\otimes\ol y^{1}_{1}(\ol y^{3})^{2}
  = (\ol x^{1}_{1}\otimes\ol y^{1}_{1})(\ol x^{3}\otimes\ol y^{3})^{2}
\end{equation*}
lies in~$\Kb[z]/(I\TFP J)$, and so $\ol M_{1}$ belongs to the total ring of fractions of~\mbox{$\Kb[z]/(I\TFP J)$}.
Hence the toric fiber product~$\Kb[z]/(I\TFP J)$ is not normal.  By Lemmas~\ref{lem:Graver-integral}
and~\ref{lem:Graver-quotient}, the normalization of \mbox{$\Kb[z]/(I\TFP J)$} equals the Segre product~$R\SP S$.

\subsection{A hierarchical model}
\label{exs:hier-mod}

In this section we compute the products of two hierarchical models to illustrate the construction of
Proposition~\ref{prop:SP-as-TFP}.  In the notation of~\cite{TFP-II}, the two ideals $I$ and $J$ are the toric ideals
describing the hierarchical models of the simplicial complexes $\Gamma_I=[13][23]$ and $\Gamma_J=[14][24]$.  The toric
fiber product is then the toric ideal of the hierarchical model of the simplicial complex
$\Gamma_{I\times_{\Acal}J}=[13][23][14][24]$.  We do not explain this notation here, since it is not needed to
understand the example.

Let
\begin{equation*}
  \Acal=
  \begin{pmatrix}
    1 & 1 & 1 & 1 \\
    1 & 1 & 0 & 0 \\
    1 & 0 & 1 & 0
  \end{pmatrix},
\end{equation*}
and assume $d_{\alpha}=d'_{\alpha}=2$ for all $\alpha,\alpha'\in[d]=\{1,2,3,4\}$.  The codimension of $\Acal$ is one,
and $\ker_{\Zb}\Acal$ is generated (as a lattice) by the single vector
$ 
  h = (1, -1, -1, 1).
$ 
A minimal Graver basis is~$\Gcal=\{\pm h\}$.

The two ideals
\begin{align*}
  I &= \langle x^{1}_{\beta}x^{4}_{\beta} - x^{2}_{\beta}x^{3}_{\beta} : \beta=1,2 \rangle \\
  J &= \langle y^{1}_{\gamma}y^{4}_{\gamma} - y^{2}_{\gamma}y^{3}_{\gamma} : \gamma=1,2 \rangle
\end{align*}
are $\Nb\Acal$-homogeneous.  In fact, both $I$ and $J$ are isomorphic to the toric ideal of the matrix
\begin{equation*}
  \Bcal =
  \begin{pmatrix}
    1 & 1 & 1 & 1 & 1 & 1 & 1 & 1 \\
    1 & 0 & 1 & 0 & 0 & 0 & 0 & 0 \\
    0 & 1 & 0 & 1 & 0 & 0 & 0 & 0 \\
    0 & 0 & 0 & 0 & 1 & 0 & 1 & 0 \\
    1 & 0 & 0 & 0 & 1 & 0 & 0 & 0 \\
    0 & 1 & 0 & 0 & 0 & 1 & 0 & 0
  \end{pmatrix}
\end{equation*}
The toric fiber product ring $\Kb[z]/(I\TFP J)$ is generated by the sixteen simple generators
\begin{equation*}
  \oxab\otimes\oyag,\qquad\text{ for all }\alpha,\beta,\gamma.
\end{equation*}
There are 32 Graver monomials
\begin{equation*}
  x^{1}_{\beta_{1}}x^{4}_{\beta_{2}}\otimes y^{2}_{\gamma_{1}}y^{3}_{\gamma_{2}}
  \quad\text{ and }\quad
  x^{2}_{\beta_{1}}x^{3}_{\beta_{2}}\otimes y^{1}_{\gamma_{1}}y^{4}_{\gamma_{2}}.
\end{equation*}
However, the relations in the ideals $I$ and $J$ imply that the corresponding Graver generators are not independent:
Namely, all Graver generators with $\beta_{1}=\beta_{2}$ or $\gamma_{1}=\gamma_{2}$ are, in fact, products of the simple
generators.  In total, the Segre product $R\SP S$ is generated by the sixteen simple generators and the eight Graver
generators with $\beta_{1}\neq\beta_{2}$ and $\gamma_{1}\neq\gamma_{2}$.  Using 
Lemmas~\ref{lem:Graver-integral} and~\ref{lem:Graver-quotient} one can show that these eight Graver generators are not
integral over~$\Kb[z]/(I\TFP J)$; but they lie in the field of fractions of~$\Kb[z]/(I\TFP J)$: In fact,
\begin{align*}
  (\ol x^{1}_{\beta_{1}}\ol x^{4}_{\beta_{2}}\otimes\ol y^{2}_{\gamma_{1}}\ol y^{3}_{\gamma_{2}})
  \cdot (\ol x^{1}_{\beta_{2}}\otimes\ol y^{1}_{1})
  &= \ol x^{1}_{\beta_{1}}\ol x^{2}_{\beta_{2}}\ol x^{3}_{\beta_{2}}\otimes\ol y^{1}_{1}\ol y^{2}_{\gamma_{1}}\ol y^{3}_{\gamma_{2}}, \\
  (\ol x^{2}_{\beta_{1}}\ol x^{3}_{\beta_{2}}\otimes\ol y^{1}_{\gamma_{1}}\ol y^{4}_{\gamma_{2}})
  \cdot (\ol x^{2}_{\beta_{2}}\otimes\ol y^{2}_{1})
  &= \ol x^{2}_{\beta_{1}}\ol x^{1}_{\beta_{2}}\ol x^{4}_{\beta_{2}}\otimes\ol y^{2}_{1}\ol y^{1}_{\gamma_{1}}\ol y^{4}_{\gamma_{2}}.
\end{align*}

The construction of Proposition~\ref{prop:SP-as-TFP} yields
\begin{equation*}
  \Acal'=
  \left(
  \begin{matrix}
    1 & 1 & 1 & 1 \\
    1 & 1 & 0 & 0 \\
    1 & 0 & 1 & 0
  \end{matrix}
  \;\middle|\;
  \begin{matrix}
    2 & 2 \\
    1 & 1 \\
    1 & 1
  \end{matrix}
  \right),
\end{equation*}
where the last two columns correspond to~$\pm h$.  Moreover,
\begin{subequations}
  \begin{align}
    \Kb[x,u]&=\Kb[x]         [u^{h}_{\beta_{1},\beta_{2}},u^{-h}_{\beta_{1},\beta_{2}}: \beta_{1},\beta_{2}=1,2],
    \label{eq:Kbxu}    \\
    \Kb[y,v]&=\Kb[\wwo{y}{x}][v^{h}_{\gama_{1},\gama_{2}},v^{-h}_{\gama_{1},\gama_{2}}: \gama_{1},\gama_{2}=1,2],
    \label{eq:Kbyv}\\
    I' &= I\Kb[x,u] + \langle u^{h}_{\beta_{1},\beta_{2}}-x^{1}_{\beta_{1}}x^{4}_{\beta_{2}}, u^{-h}_{\beta_{1},\beta_{2}}-x^{2}_{\beta_{1}}x^{3}_{\beta_{2}} : \beta_{1},\beta_{2}=1,2 \rangle, \\
    J' &= I\Kb[\wwo{y}{x},v] + \langle v^{h}_{\gama_{1},\gama_{2}}-\wwo{y}{x}^{2}_{\gama_{1}}\wwo{y}{x}^{3}_{\gama_{2}},
    v^{-h}_{\gama_{1},\gama_{2}}-\wwo{y}{x}^{1}_{\gama_{1}}\wwo{y}{x}^{4}_{\gama_{2}} : \gamma_{1},\gamma_{2}=1,2\rangle.
    \label{eq:Jp}    
  \end{align}
\end{subequations}
In fact, using the defining relations of $I$ and $J$ it is possible to simplify the construction by removing the
generators $u^{\pm h}_{\beta_{1},\beta_{2}}$ and $v^{\pm h}_{\gamma_{1},\gamma_{2}}$ with $\beta_{1}=\beta_{2}$ and
$\gamma_{1}=\gamma_{2}$.  Therefore, we may additionally require $\beta_{1}\neq\beta_{2}$ in~\eqref{eq:Kbxu}
to~\eqref{eq:Jp}.
Hence $I'$ and $J'$ are isomorphic to the toric ideal of the matrix
\begin{equation*}
  \Bcal' =
  \left(\begin{matrix}
    1 & 1 & 1 & 1 & 1 & 1 & 1 & 1 \\
    1 & 0 & 1 & 0 & 0 & 0 & 0 & 0 \\
    0 & 1 & 0 & 1 & 0 & 0 & 0 & 0 \\
    0 & 0 & 0 & 0 & 1 & 0 & 1 & 0 \\
    1 & 0 & 0 & 0 & 1 & 0 & 0 & 0 \\
    0 & 1 & 0 & 0 & 0 & 1 & 0 & 0
  \end{matrix}
  \;\middle|\;
  \begin{matrix}
    2 & 2 \\
    1 & 0 \\
    0 & 1 \\
    0 & 1 \\
    1 & 0 \\
    0 & 1
  \end{matrix}
  \;\middle|\;
  \begin{matrix}
    2 & 2 \\
    1 & 0 \\
    0 & 1 \\
    0 & 1 \\
    0 & 1 \\
    1 & 0
  \end{matrix}
  \right).
\end{equation*}
The two additional blocks correspond to~$\pm h$.

For each $\alpha=1,\dots,6$ there are two variables in $\Kb[x,u]$ resp.~$\Kb[y,v]$ of degree~$a_{\alpha}$, where
$a_{\alpha}$ denotes the $\alpha$th column of~$\Acal'$.  Therefore, the toric fiber product of $I'$ and $J'$ is a toric
ideal in $2\cdot2\cdot6=24$ variables with matrix
\begin{equation*}
  \Bcal'\times_{\Acal'}\Bcal'
  =
  \left(
    \everymath{\scriptstyle}
  \begin{matrix}
    1 & 1 & 1 & 1 & 1 & 1 & 1 & 1  \\
    1 & 1 & 0 & 0 & 1 & 1 & 0 & 0  \\
    0 & 0 & 1 & 1 & 0 & 0 & 1 & 1  \\
    0 & 0 & 0 & 0 & 0 & 0 & 0 & 0  \\
    1 & 1 & 0 & 0 & 0 & 0 & 0 & 0  \\
    0 & 0 & 1 & 1 & 0 & 0 & 0 & 0  \\
    1 & 1 & 1 & 1 & 1 & 1 & 1 & 1  \\
    1 & 0 & 1 & 0 & 1 & 0 & 1 & 0  \\
    0 & 1 & 0 & 1 & 0 & 1 & 0 & 1  \\
    0 & 0 & 0 & 0 & 0 & 0 & 0 & 0  \\
    1 & 0 & 1 & 0 & 0 & 0 & 0 & 0  \\
    0 & 1 & 0 & 1 & 0 & 0 & 0 & 0 
  \end{matrix}
  \;\middle.\;
    \everymath{\scriptstyle}
  \begin{matrix}
    1 & 1 & 1 & 1 & 1 & 1 & 1 & 1 \\
    0 & 0 & 0 & 0 & 0 & 0 & 0 & 0 \\
    0 & 0 & 0 & 0 & 0 & 0 & 0 & 0 \\
    1 & 1 & 0 & 0 & 1 & 1 & 0 & 0 \\
    1 & 1 & 0 & 0 & 0 & 0 & 0 & 0 \\
    0 & 0 & 1 & 1 & 0 & 0 & 0 & 0 \\
    1 & 1 & 1 & 1 & 1 & 1 & 1 & 1 \\
    0 & 0 & 0 & 0 & 0 & 0 & 0 & 0 \\
    0 & 0 & 0 & 0 & 0 & 0 & 0 & 0 \\
    1 & 0 & 1 & 0 & 1 & 0 & 1 & 0 \\
    1 & 0 & 1 & 0 & 0 & 0 & 0 & 0 \\
    0 & 1 & 0 & 1 & 0 & 0 & 0 & 0 
  \end{matrix}
  \;\middle.\;
    \everymath{\scriptstyle}
  \begin{matrix}
    2 & 2 & 2 & 2 & 2 & 2 & 2 & 2 \\
    1 & 1 & 0 & 0 & 1 & 1 & 0 & 0 \\
    0 & 0 & 1 & 1 & 0 & 0 & 1 & 1 \\
    0 & 0 & 1 & 1 & 0 & 0 & 1 & 1 \\
    1 & 1 & 0 & 0 & 0 & 0 & 1 & 1 \\
    0 & 0 & 1 & 1 & 1 & 1 & 0 & 0 \\
    2 & 2 & 2 & 2 & 2 & 2 & 2 & 2 \\
    1 & 0 & 1 & 0 & 1 & 0 & 1 & 0 \\
    0 & 1 & 0 & 1 & 0 & 1 & 0 & 1 \\
    0 & 1 & 0 & 1 & 0 & 1 & 0 & 1 \\
    0 & 1 & 0 & 1 & 1 & 0 & 1 & 0 \\
    1 & 0 & 1 & 0 & 0 & 1 & 0 & 1
  \end{matrix}
  \right),
\end{equation*}
while the toric fiber product of $I$ and $J$ is a toric ideal in $2\cdot2\cdot4=16$ variables.

In this construction it is possible to choose a smaller matrix $\Acal'$ by identifying the last two columns of~$\Acal'$.
However, this leads to a larger polynomial ring $\Kb[z'']$ and thus enlarges the toric fiber product ideal, since in
this case there are more pairs $(u^{\pm h}_{\beta_{1},\beta_{2}},v^{\pm h}_{\gamma_{1},\gamma_{2}})$ of variables of
$\Kb[x,u]$ and $\Kb[y,v]$ that have the same degree.  In fact, in this example the toric fiber product of $I'$ and $J'$
over the matrix
\begin{equation*}
  \Acal''=
  \left(
  \begin{matrix}
    1 & 1 & 1 & 1 \\
    1 & 1 & 0 & 0 \\
    1 & 0 & 1 & 0
  \end{matrix}
  \;\middle|\;
  \begin{matrix}
    2 \\
    1 \\
    1
  \end{matrix}
  \right),
\end{equation*}
is a toric ideal in $2\cdot2\cdot4 + 4\cdot 4=32$ variables.  Of course,
\begin{equation*}
  \Kb[z'']/(I'\TFP[\Acal''] J')
  \cong
  \Kb[z']/(I'\TFP[\Acal'] J')
  \cong
  (\Kb[x]/I) \SP[\Nb\Acal'] (\Kb[y]/J).
\end{equation*}

\bibliographystyle{IEEEtranS}
\bibliography{math}

\end{document}